\newcommand{\ol}[1]{\overline{#1}}
\numberwithin{equation}{section}
\newcommand{\R}{\ensuremath{\mathbb{R}}}
\newcommand{\Rd}{\ensuremath{\mathbb{R}^d}}
\newcommand{\Rn}{\ensuremath{\mathbb{R}^n}}
\newcommand{\dm}{d}
\newcommand{\N}{\ensuremath{\mathbb{N}}}
\newcommand{\sd}{\, d}
\newcommand{\eps}{\ensuremath{\varepsilon}}
\newcommand{\weight}[1]{\langle #1\rangle}
\newcommand{\Div}{\operatorname{div}}
\newcommand{\Hzero}{H^{-1}_{(0)}}
\newcommand{\Hone}{H^1_{(0)}}
\newcommand{\A}{\mathcal{A}}
\newcommand{\B}{\mathcal{B}}
\newcommand{\ve}{\mathbf{v}}
\newcommand{\we}{\mathbf{w}}
\newcommand{\ue}{\mathbf{u}}
\newcommand{\vc}[1]{\mathbf{#1}}
\newcommand{\tn}[1]{\mathbb{#1}}
\newtheorem{thm}{THEOREM}[section]
\newtheorem{cor}[thm]{Corollary}
\newtheorem{lem}[thm]{Lemma}
\newtheorem{prop}[thm]{Proposition}
\newtheorem{claim*}{Claim}
\newtheorem{assumption}[thm]{Assumption}
\newtheorem{rem}[thm]{Remark}
\newenvironment{proof*}[1]{{\bf Proof
#1:}}{\hspace*{\fill}\rule{1.2ex}{1.2ex}\\ } 
\newenvironment{proof}{{\bf
Proof:\,}}{\hspace*{\fill}\rule{1.2ex}{1.2ex}\\ }
\newcommand{\hrho}{\hat{\rho}}
\newcommand{\pot}{\phi}
\newcommand{\Pot}{\Phi}
\newcommand{\normal}{n}
\newcommand{\free}{\text{free}}
\begin{document}
\begin{titlepage}
\title{Strong Well-Posedness of a  Diffuse Interface Model for a Viscous, Quasi-Incompressible Two-Phase Flow}
\author{  Helmut Abels\footnote{Fakult\"at f\"ur Mathematik,  
Universit\"at Regensburg,
93040 Regensburg,
Germany, e-mail: {\sf helmut.abels@mathematik.uni-regensburg.de}
}}
\end{titlepage}
\maketitle
\begin{abstract}
  We study a diffuse interface model for the flow of two viscous
 incompressible Newtonian fluids in a bounded domain. The
 fluids are assumed to be macroscopically immiscible, but a partial mixing in
 a small interfacial region is assumed in the model. Moreover, diffusion of
 both components is taken into account. 
In contrast to previous
 works, we study a model for the general case that the fluids have different densities due to Lowengrub and Truskinovski~\cite{LowengrubQuasiIncompressible}. This
 leads to an inhomogeneous Navier-Stokes system coupled to a Cahn-Hilliard
 system, where the density of the mixture depends on the concentration, the velocity field is
 no longer divergence free, and the pressure enters the equation for the
 chemical potential.
We prove
 existence of unique strong solutions for the non-stationary system for sufficiently small times.
\end{abstract}
\noindent{\bf Key words:} Two-phase flow, free boundary value problems,
 diffuse interface model, mixtures of viscous fluids, Cahn-Hilliard equation,
 inhomogeneous Navier-Stokes equation 

\noindent{\bf AMS-Classification:} 
Primary: 76T99, 
Secondary: 35Q30, 
35Q35, 
76D27, 
76D03, 
76D05, 
76D45 

\section{Introduction}

\label{sec:IntroLT} 

In this article we consider a so-called diffuse interface model for two
viscous, incompressible Newtonian fluids of different densities. In the model a
partial mixing of the macroscopically immiscible fluids is considered and
diffusion effects are taken into account. 
Such models have been successfully used to describe flows of two or more macroscopically fluids beyond the occurrence of topological singularities of the separating interface (e.g. coalescence or formation of drops).  We refer to Anderson and McFadden~\cite{DiffIntModels} for a review on that topic.

The model which we are considering leads  
to the system
\begin{alignat}{2}\label{eq:FirstLT1}
  \rho \partial_t \ve + \rho \ve\cdot \nabla \ve -\Div \tn{S}(c,\tn{D}\ve)+ \nabla p&= -\Div (a(c)\nabla c\otimes \nabla c)&\quad & \text{in}\ Q_T,\\
  \partial_t \rho +\Div (\rho \ve)&= 0 && \text{in}\ Q_T,\\
  \rho \partial_t c+ \rho \ve\cdot \nabla c &= \Div(m(c) \nabla \mu)&& \text{in}\ Q_T,
\end{alignat}
\begin{equation}
\label{eq:FirstLT4}
 \rho \mu = -\rho^{-1}\frac{\partial\rho}{\partial c}\left(p + \Phi(c)+ \frac{a(c)|\nabla c|^2}2\right) + \pot(c) -a(c)^\frac12\Div(a(c)^\frac12\nabla c)
\end{equation}
where $Q_T=\Omega \times (0,T)$ and $\Omega\subset
\R^d$, $d=2,3$, is a bounded domain with $C^3$-boundary.
Here $\ve$ and $\rho=\hrho(c)$ are the (mean) velocity  and the
density of the mixture of the two fluids, $p$ is the pressure,
 $c$ is
the difference of the mass concentrations of the two fluids, and $\mu$ is the chemical
potential associated to $c$. Moreover, 
\begin{equation}\label{eq:DefnS}
  \tn{S}(c,\tn{D}\ve) = 2\nu(c) \tn{D}\ve + \eta(c) \Div \ve\, \tn{I},
\end{equation}
where $\tn{S}(c,\tn{D}\ve)$ is the stress tensor, $\tn{D}\ve= \frac12(\nabla \ve + \nabla \ve^T)$,
$\nu(c),\eta(c)>0$ are two viscosity coefficients, and $m(c)>0$ is a mobility coefficient. Furthermore,
$\Phi(c)$ is the homogeneous free energy density for the mixture and $\phi(c)=\Phi'(c)$.

This is a variant of the model proposed
by Lowengrub and Truskinovski~\cite{LowengrubQuasiIncompressible} for an interfacial energy of the form
\begin{equation*}
    E_{\free}(c) = \int_\Omega \Pot(c) \sd x + \int_\Omega a(c)\frac{|\nabla c|^2}2\sd x,
\end{equation*}
where the choice $a(c)=\rho(c)$ was proposed in \cite{LowengrubQuasiIncompressible}. A
derivation of the latter system can also be found in
\cite[Chapter~II]{Habilitation} (in an even more general form). We note that the term
$a(c)\nabla c\otimes \nabla c$ comes from an extra contribution
to the stress tensor, which models capillary forces in an interfacial region. The model is a generalization of a
well-known diffuse interface model in the case of matched densities which
corresponds to the case $\hrho(c)\equiv const.$,
cf. e.g. Gurtin et al.~\cite{GurtinTwoPhase}.

The system is equivalent to
\begin{alignat}{2}\label{eq:LT1}
  \rho \partial_t \ve + \rho \ve\cdot \nabla \ve -\Div \tn{S}(c,\tn{D}\ve)
  + \rho \nabla g_0&= \rho \mu_0 \nabla c&\quad& \text{in}\ Q_T,\\
  \label{eq:LT2}
  \partial_t \rho +\Div (\rho \ve)&= 0 && \text{in}\ Q_T,\\
  \label{eq:LT3}
  \rho \partial_t c+ \rho \ve\cdot \nabla c &= \Div (m(c)\nabla \mu_0)&&
  \text{in}\ Q_T,
\\  \label{eq:LT4}
  \rho \mu_0 + \rho^2\bar{p} =\beta \rho^2 g_0 -a(c)^\frac12 \Div (a(c)^\frac12\nabla c) &+ \pot(c)&& \text{in}\ Q_T,
\end{alignat}
together with
\begin{equation}\label{eq:LT42}
  \int_\Omega \mu_0(t) \sd x=\int_\Omega g_0(t) \sd x= 0 \quad \text{for all}\ t\in
  (0,T), 
\end{equation}
where  $\mu_0$ is the mean-value free part of $\mu$, and
$\bar{p}$ is a constant (depending on time), which is related to the mean values
of the pressure and the chemical potential. Here $p$ and $g$ are related by
\begin{equation*}
  g= \frac{\Pot(c)}\rho + \frac{a(c)|\nabla c|^2}{2\rho} + \frac{p}\rho - \bar{\mu} c, 
\end{equation*}
and $\mu = \mu_0+ \bar{\mu}$, $\bar{\mu}= \frac1{|\Omega|} \int_\Omega \mu \sd x$, $g = g_0+ \bar{g}$, $\bar{g}= \frac1{|\Omega|} \int_\Omega g \sd x$. Details on this equivalence can be found in \cite[Section~3]{LTModel}.  Here it is assumed that the fluids mix with zero excess volume,
cf. \cite{LowengrubQuasiIncompressible}. This implies 
\begin{equation*}
  \frac1{\hrho(c)} = \frac{1+c}{2 \bar{\rho}_1} +\frac{1-c}{2\bar{\rho}_2}, 
\end{equation*}
where $\bar{\rho}_j$ is the specific density of fluid $j=1,2$. Hence $\hrho(c)$ is of the form
\begin{equation}
  \label{eq:rho}
\hrho(c)=
\frac1{\alpha + \beta c}
\end{equation}
for  $\alpha>0$ and $|\beta|<\alpha$. (More precisely, $\beta= \frac1{2\bar{\rho}_1}-\frac1{2\bar{\rho}_2}, \alpha=
\frac1{2\bar{\rho}_2}+\frac1{2\bar{\rho}_1}$.)

We close the system by adding the boundary and initial conditions
\begin{alignat}{2}\label{eq:LT5}
  \vc{n}\cdot\ve|_{\partial\Omega}= \left.(\vc{n}\cdot \tn{S}(c,\tn{D}\ve))_\tau+\gamma(c) \ve_\tau\right|_{\partial\Omega}
  &= 0 &\quad& \text{on}\ S_T,\\\label{eq:LT6}
  \partial_n c|_{\partial\Omega}=\partial_n \mu_0 |_{\partial\Omega}
  &= 0 &\quad& \text{on}\ S_T,\\\label{eq:LT7}
  (\ve,c)|_{t=0} &=(\ve_0,c_0) && \text{in}\ \Omega,
\end{alignat}
where $S_T= \partial\Omega \times (0,T)$. I.e., we assume that $\ve$ satisfies Navier boundary conditions with some friction parameter $\gamma\colon \R\to [0,\infty)$  and assume Neumann boundary conditions for $c$ and $\mu$. 

In
the  case of matched densities, i.e., $\hrho\equiv const.$, $\beta=0$, resp., results on existence of weak solutions and well-posedness  were obtained by Starovoitov~\cite{StarovoitovModelH}, Boyer~\cite{BoyerModelH}, Liu
and Shen~\cite{LiuShenModelH}, and the author~\cite{ModelH}. The long time dynamics was studied by Gal and Grasselli~\cite{GalGrasselliModelH2D,GalGrasselliDCDS,GalGrasselliTrajAttr}, Zhao et al.~\cite{ZhaoEtAl} and in \cite{LongtimeModelH,ModelH}.
Moreover, in
\cite{BoyerNonMatched} Boyer considered a different diffuse interface model for fluids with
non-matched densities. He proved existence of strong solutions, locally in
time, and existence of global weak solutions if the densities of the fluids
are sufficiently close.
In the case of general densities, existence of weak solutions of a slightly modified system was shown in \cite{LTModel}, where the  case of a free energy of the form
\begin{equation*}
    E_{\free}(c) = \int_\Omega \Pot(c) \sd x + \int_\Omega a(c)\frac{|\nabla c|^q}q\sd x
\end{equation*}
with $q>d$ is considered. This is the only analytic result for the model \eqref{eq:FirstLT1}-\eqref{eq:FirstLT4} so far known to the author. To the authors knowledge there are no numerical studies of this model in the case $\beta\neq 0$. A simplified model was used by Lee et al.~\cite{LeeLowengrub1,LeeLowengrub2} in numerical simulations.
 Moreover, A. and Feireisl~\cite{CompressibleNSCH}
constructed weak solutions globally in time for a corresponding diffuse
interface model for compressible fluids.

In the following, we will only
consider the case $\rho(c)\not\equiv const.$, i.e., $\beta\neq 0$ and will restrict ourselves for simplicity to the case $a(c)=m(c)=1$. Moreover, we do the following assumption:
\begin{assumption}\label{assump:3}
  Let $\Omega\subset \R^d$, $d=2,3$, be a bounded domain with $C^3$-boundary,
  let $\eps>0$, $\alpha>0$, and $\beta \neq 0$ such that
  $|\beta|<\alpha$. Moreover, we assume $\nu,\eta,\gamma\in C^2(\R)$ such that
  $\inf_{s\in\R}\nu(s),\inf_{s\in\R}\eta(s)>0$, $\inf_{s\in \R} \gamma(s)\geq 0$, $\Phi\in C^3(\R)$ and $\hrho\in C^3(\R)$ such that
  $\hrho(s) = \frac1{\alpha +\beta s}$ if $s\in [-1-\eps_0,1+\eps_0]$ for some $\eps_0$ and $\hrho(s)>0$ for
  all $s\in\R$.  Finally, if $\inf_{s\in\R} \gamma(s)=0$, then we assume that $\Omega$ has no axis of symmetry, cf. Appendix for details.
\end{assumption}
Now our main result on short time existence of strong solutions is:
\begin{thm}\label{thm:main} 
  Let $\ve_0\in H^1_n(\Omega), c_0\in H^2(\Omega)$ with $|c_0(x)|\leq 1$ almost everywhere and $\partial_n c_0|_{\partial\Omega}=0$, $d=2,3$, 
and let
  Assumption~\ref{assump:3} hold. Then there is some $T>0$ such that there is a unique solution $\ve\in H^1(0,T;L^2_\sigma(\Omega))\cap L^2(0,T;H^2(\Omega)^d)), c\in H^2(0,T;H^{-1}_{(0)}(\Omega))\cap L^2(0,T;H^3(\Omega))$ solving \eqref{eq:LT1}-\eqref{eq:LT4},\eqref{eq:LT5}-\eqref{eq:LT7}.
\end{thm}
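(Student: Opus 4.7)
The plan is to prove Theorem~\ref{thm:main} by a contraction mapping argument in the regularity class
$$X_T := \left\{(\ve,c): \ve\in H^1(0,T;L^2_\sigma(\Omega))\cap L^2(0,T;H^2(\Omega)^d),\ c\in H^2(0,T;H^{-1}_{(0)}(\Omega))\cap L^2(0,T;H^3(\Omega))\right\},$$
based on maximal $L^2$-regularity for the linearization of \eqref{eq:LT1}--\eqref{eq:LT4} at the initial data $(\ve_0,c_0)$.

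A useful preparatory step is to exploit the algebraic identity
$$\Div \ve \;=\; \beta\,\Delta\mu_0,$$
which follows by differentiating $\rho=\hrho(c)=(\alpha+\beta c)^{-1}$ (whose reciprocal satisfies $(1/\hrho)'\equiv\beta$), substituting into \eqref{eq:LT2}, and using \eqref{eq:LT3} with $m\equiv 1$. This converts the non-solenoidal character of $\ve$ into a concrete coupling to the Cahn--Hilliard variables. Combined with \eqref{eq:LT4}, which for $a\equiv 1$ reads $\rho\mu_0+\rho^2\bar p=\beta\rho^2 g_0+\pot(c)-\Delta c$, and the zero-mean constraint \eqref{eq:LT42} (which algebraically selects the scalar $\bar p(t)$), the system may be recast as an evolution for $(\ve,g_0,c,\mu_0)$ in which $\Div\ve=\beta\Delta\mu_0$ plays the role of the incompressibility constraint of standard Navier--Stokes and $g_0$ acts as its Lagrange multiplier.

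The analytic core is maximal regularity for the frozen-coefficient linearization. It decomposes into a generalized Stokes block for $(\ve,g_0)$ with Navier boundary condition and prescribed divergence $\beta\Delta\mu_0$, and a Cahn--Hilliard-type block for $(c,\mu_0)$ with Neumann boundary conditions. For the Stokes block I would appeal to the maximal regularity theory for generalized Stokes problems with Navier slip conditions, where the no-axis-of-symmetry assumption (used when $\inf\gamma=0$) precisely supplies the missing Korn-type coercivity. For the Cahn--Hilliard block the bi-Laplacian structure obtained by substituting $\mu_0$ from \eqref{eq:LT4} yields the desired class $c\in H^2(0,T;H^{-1}_{(0)})\cap L^2(0,T;H^3)$. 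The coupling of the two blocks through the terms $\rho_0^2\bar p-\beta\rho_0^2 g_0$ in \eqref{eq:LT4} and $\rho_0\mu_0\nabla c_0$ in \eqref{eq:LT1} is of lower order once coefficients are frozen, and is absorbed by a short-time perturbation argument.

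Given the linear theory, the nonlinear problem is closed by a standard fixed-point iteration. For $(\tilde\ve,\tilde c)\in X_T$ agreeing with $(\ve_0,c_0)$ at $t=0$, one solves the linear problem with the nonlinear terms -- the convective derivatives $\ve\cdot\nabla\ve$, $\ve\cdot\nabla c$, the capillary stress $\rho\mu_0\nabla c$, and the $c$-dependent coefficient differences of $\nu,\eta,\gamma,\hrho,\pot$ -- evaluated at $(\tilde\ve,\tilde c)$, and exploits the embedding $X_T\hookrightarrow C([0,T];H^1\times H^2)$ to get Lipschitz constants $o(1)$ as $T\to 0$. The same embedding together with $|c_0|\le 1$ guarantees $|c|\le 1+\eps_0$ for small $T$, so that $\hrho(c)>0$ and Assumption~\ref{assump:3} remains in force. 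I expect the main obstacle to be the coupled maximal regularity for the linearization: unlike standard Cahn--Hilliard and Stokes theories, the algebraic elimination of $\mu_0$ and $\bar p$ through the pressure-carrying equation \eqref{eq:LT4} genuinely entangles the Stokes and the fourth-order parabolic structures, and closing the estimates requires a simultaneous inversion of both blocks in the correct anisotropic Sobolev space.
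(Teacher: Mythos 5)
Your overall architecture (eliminate $\mu_0$ via $\Div\ve=\beta\Delta\mu_0$, freeze coefficients, prove maximal $L^2$-regularity for the linearization, close with a contraction with Lipschitz constant $o(1)$ as $T\to0$) agrees with the paper. The gap is in the linear theory: your claim that, after freezing coefficients, the coupling between the Stokes block for $(\ve,g_0)$ and the Cahn--Hilliard block for $(c,\mu_0)$ is of lower order and can be absorbed perturbatively is false, and this is precisely the point where the problem differs from the matched-density case. Once $g_0$ and $\mu_0$ are eliminated, the momentum equation contains the term $\tfrac1\beta\nabla\bigl(\rho^{-2}\Delta c\bigr)$ (linearized: $\tfrac{1}{\beta\alpha}\nabla\Div(\rho_0^{-4}\nabla c')$ with $c'\approx\rho c$), which carries three derivatives of $c$ and maps $L^2(0,T;H^3)$ exactly onto the target $L^2(Q_T)$ of the momentum balance; conversely $\beta^{-1}\Div\ve$ sits at top order in the equation $\partial_t c'-\beta^{-1}\Div\ve=f_2$. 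In your unreduced bookkeeping the same obstruction appears as follows: the term $\beta\rho^2 g_0$ in \eqref{eq:LT4} injects $\Delta(\rho g_0)\in L^2(0,T;H^{-1})$ into the $c$-equation, which a fourth-order parabolic block cannot absorb, while the constraint $\Div\ve=\beta\Delta\mu_0$ with $\mu_0\sim-\rho^{-1}\Delta c$ would require $c\in L^2(0,T;H^5)$ to be compatible with $\ve\in L^2(0,T;H^2)\cap H^1(0,T;L^2)$. There is no regime in which the two blocks decouple at principal order.

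Relatedly, the $c$-block is not of Cahn--Hilliard (bi-Laplacian parabolic) type at all: a fourth-order parabolic equation would place $c$ in $H^1(0,T;Y)\cap L^2(0,T;\mathcal D)$ with $\mathcal D$ four orders above $Y$, whereas the class in the theorem, $H^2(0,T;H^{-1}_{(0)})\cap L^2(0,T;H^3)$, is \emph{second order in time}. The correct structure, which your proposal misses, is obtained by Helmholtz-decomposing $\ve=P_\sigma\ve+\nabla G(\Div\ve)$: the solenoidal part solves a Stokes-type equation with Navier conditions, while the gradient part together with $c'$ reduces to a structurally damped plate equation
\begin{equation*}
  \partial_t^2 c' -\Delta\bigl(a(c_0)\partial_t c'\bigr)+ \frac{\eps}{\alpha\beta^2}\,\Delta\Div\bigl(\rho_0^{-4}\nabla c'\bigr)= f
\end{equation*}
up to lower-order terms, with $a(c_0)=2\tilde\nu(c_0)+\tilde\eta(c_0)$. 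Generation of an analytic semigroup for the associated first-order system is not standard parabolic theory; the paper invokes the Chen--Triggiani theorem on structural damping (conditions $\rho_1A^{1/2}\le B\le\rho_2A^{1/2}$ for $A\sim\Delta_N\Div(\rho_0^{-4}\nabla\cdot)$ and $B\sim-\Delta_N(a(c_0)\cdot)$ on $H^{-1}_{(0)}$), and the resulting spectral angle can be arbitrarily close to $\pi/2$. You acknowledge at the end that a ``simultaneous inversion of both blocks'' is needed, but this contradicts your earlier perturbative claim and no mechanism for it is supplied; without the damped-plate reduction (or an equivalent substitute) the maximal regularity estimate for the linearization is not established.
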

Precise definitions of the function spaces are given in Section~\ref{sec:preliminaries} below.

The theorem is proved by linearizing the system suitably, proving that the linearized operator defines an isomorphism between certain $L^2$-Sobolev spaces, and applying a contraction mapping argument. To apply this general strategy it is essential to reformulate the system \eqref{eq:LT1}-\eqref{eq:LT4}. To this end, we eliminate $\mu_0$ and $g_0$ first from the system. Then the principal part of the linearized system (around $(\ve_0,c_0)$) is
\begin{eqnarray}\label{eq:lin1}
      \partial_t \ve - \Div \widetilde{\tn{S}}(c_0,\tn{D}\ve) + \frac{\eps}{\beta\alpha}\nabla \Div (\rho^{-4}_0 \nabla c')&=& \vc{f}_1\qquad \text{in}\ Q_T,\\\label{eq:lin2}
     \partial_t c' - \beta^{-1} \Div \ve&=& f_2 \qquad \text{in}\ Q_T,
\end{eqnarray}
where $c'\approx \rho c$ and $\rho_0=\hrho(c_0)$, cf. Section~\ref{sec:Strong} below. One of the essential steps in the proof of the main result is the analysis of this linearized system. 
To this end we split $\ve$ in a divergence free part $\we=P_\sigma \ve$, a gradient part $\nabla G(\Div \ve)$, which is determined uniquely by $g=\Div v$, and a lower order part, cf. Section~\ref{sec:LinearPart} for details. A crucial observation is that $c'$, which is related to $\Div \ve$ via \eqref{eq:lin2}, solves a kind of damped wave/plate equation. More precisely, $c'$ solves an equation of the form
\begin{equation}\label{eq:dampedWave}
  \partial_t^2 c' -\Delta (a(c_0)\partial_t c' )+  \frac\eps{\alpha\beta^2} \Delta\Div (\rho^{-4}_0\nabla c')= f 
\end{equation}
up to to lower order terms for some $a(c_0)>0$. In order to solve this equation we will apply the abstract result of Chen and Triggiani~\cite{ChenTriggiani}. -- We note that the same kind of linearized system arises in Kotschote~\cite{KotschoteKorteweg}, where existence of strong solutions locally in time is proved for a compressible Navier-Stokes-Korteweg system.
\begin{rem}
  It is interesting to compare \eqref{eq:lin1}-\eqref{eq:lin2} to the linearized system of the Model H for the case of matched densities, i.e, \eqref{eq:FirstLT1}-\eqref{eq:FirstLT4} in the case when $\beta=0$ and thus $\rho(c)\equiv const.$ Then the pressure $p$ is no longer part of \eqref{eq:FirstLT4}, $\Div \ve =0$, $p$ can no longer be eliminated from the system, and  the principal part of the linearized system  is 
\begin{alignat*}{2}
      \partial_t \ve - \Div \widetilde{\tn{S}}(c_0,\tn{D}\ve) + \nabla p&= \vc{f}_1&\qquad& \text{in}\ Q_T,\\
\Div \ve &= 0 &&  \text{in}\ Q_T,\\
     \partial_t c +\Div (m(c_0)\nabla \Delta c)&= f_2 &\qquad& \text{in}\ Q_T.
\end{alignat*}
Hence the linearized system is very different. In particular, the principal part for $c$ is given by a fourth order diffusion equation with $\sigma(-\Div (m(c)\nabla \Delta c))\subset (-\infty,0]$ for a suitable realization. While the corresponding operator $A_1$ to \eqref{eq:dampedWave} (after reduction to a first order system, cf. Section~\ref{sec:LinearPart}) still generates an analytic semigroup, but the spectral angle $\delta<\frac{\pi}2$ can be arbitrarily close to $\frac{\pi}2$ in certain situations, cf. Remark~\ref{rem:Spectrum} below. 
Moreover, note that  the Cahn-Hilliard part is decoupled from the  Navier-Stokes part on the level of the principal part of the linearized system in the case $\beta=0$, which is no longer the case if $\beta \neq 0$. -- We hope that the insight on the analytic structure of the system~\eqref{eq:LT1}-\eqref{eq:LT4} will help to create stable numerical algorithms, which are not available so far to the best of the author's knowledge.
\end{rem}

The structure of the article is as follows: In Section~\ref{sec:preliminaries} we summarize some notation and preliminary results. The main part of the article consists of Sections~\ref{sec:Strong} and~\ref{sec:LinearPart}. First, in Section~\ref{sec:Strong} the system is linearized and the contraction mapping principle is applied on the basis of the  well-posedness result Theorem~\ref{thm:LinearPart} for the linearized system. Afterwards, in Section~\ref{sec:LinearPart} this result is proved.

\section{Preliminaries}\label{sec:preliminaries}

\noindent {\bf Notation:}
Let us fix some notation first. For $a,b\in \Rd$ let $a\otimes b\in \R^{d\times d}$ be defined by $(a\otimes b)_{i,j}= a_ib_j$. Moreover, for $A,B\in \R^{d\times d}$ let $A:B= \operatorname{tr} (A^TB)= \sum_{i,j=1}^d a_{i,j}b_{i,j}$. In the following $\vc{n}$ will denote the exterior normal at the boundary of a sufficiently smooth domain $\Omega\subset\R^d$. Furthermore, $\vc{f}_n:=\vc{n}\cdot \vc{f}$ and $\vc{f}_\tau:=(I-\vc{n}\otimes \vc{n})f=\vc{f}-\vc{f}_n\vc{n}$ denote the normal and tangential component of a vector field $\vc{f}\colon \partial\Omega\to \R^d$, respectively.
Furthermore $\partial_n := \vc{n}\cdot \nabla $, $\nabla_{\tau}:= (I-\vc{n}\otimes \vc{n})\nabla $, and  $\partial_{\tau_j}:= e_j\cdot (I-\vc{n}\otimes \vc{n})\nabla $, $j=1,\ldots, d$, where $e_j$ denotes the $j$-th canonical unit vector in $\R^d$. If $\vc{v}\in C^1(\Omega)^d$, then $\nabla \vc{v}=D\vc{v}=(\partial_{x_j}v_i)_{i,j=1}^d$ denotes its Jacobian. Moreover, if $A=(a_{ij})_{i,j=1}^d\colon \Omega\to \R^{d\times d}$ is differentiable, then $\Div A(x):= (\sum_{j=1}^d\partial_{x_j} a_{ij}(x))_{i=1}^d$ for all $x\in\Omega$.

If $X$ is a Banach space and $X'$ is its dual, then
\begin{equation*}
  \weight{f,g}\equiv\weight{f,g}_{X',X} = f(g), \qquad f\in X',g\in X,
\end{equation*}
denotes the duality product. The inner product on a Hilbert space $H$ is
denoted by $(.,.)_H$. Moreover, we use the abbreviation $(.,.)_{M}=
(.,.)_{L^2(M)}$. 

\medskip

\noindent {\bf Function spaces:}
If $M\subseteq \R^d$ is measurable, 
$L^q(M)$, $1\leq q \leq \infty$ denotes the usual Lebesgue-space and $\|.\|_q$
its norm.
Moreover, $L^q(M;X)$ denotes its vector-valued variant of strongly measurable
$q$-integrable functions/essentially bounded functions, where $X$ is a Banach
space. If $M=(a,b)$, we write for simplicity $L^q(a,b;X)$ and $L^q(a,b)$.
 
Let $\Omega \subset \Rd$ be a domain. 
Then $W^m_q(\Omega)$, $m\in \N_0$, $1\leq q\leq \infty$, denotes the usual $L^q$-Sobolev space, 
$W^m_{q,0}(\Omega)$  the closure of $C^\infty_0(\Omega)$ in $W^m_q(\Omega)$,
$W^{-m}_q(\Omega)= (W^m_{q',0}(\Omega))'$, and $W^{-m}_{q,0}(\Omega)= (W^m_{q'}(\Omega))'$. 
The $L^2$-Bessel potential spaces are denoted by
$H^s(\Omega)$, $s\in\R$, which are defined by restriction of distributions in
$H^s(\Rd)$ to $\Omega$, cf. Triebel~\cite[Section~4.2.1]{Triebel1}. Moreover, $H^s(M)$ denotes the corresponding space on a sufficiently smooth compact manifold $M$. We note
that, if $\Omega\subset \R^\dm$ is a bounded domain with $C^{0,1}$-boundary,
then there is an extension operator $E_\Omega$ which is a bounded linear operator
$E_\Omega\colon W^m_p(\Omega)\to W^m_p(\R^\dm)$, $1\leq p\leq \infty$ for all
$m\in \N$ and $E_\Omega f|_{\Omega} = f$ for all $f\in W^m_p(\Omega)$,
cf. Stein~\cite[Chapter VI, Section 3.2]{Stein:SingInt}. This extension
operator extends to $E_\Omega\colon H^s(\Omega)\to H^s(\Rn)$, which shows that
$H^s(\Omega)$ is a retract of $H^s(\Omega)$. Therefore all results on
interpolation spaces of $H^s(\Rn)$ carry over to $H^s(\Omega)$. We refer to Bergh and L\"ofstr\"om~\cite{Interpolation} for basic results in interpolation theory. In the following $(.,.)_{[\theta]}$ and $(.,.)_{\theta,q}$ will denote the complex and real interpolation function, respectively.
In particular, we note that
\begin{equation}\label{eq:BesselInterpol} 
  (H^{s_0}(\Omega), H^{s_1}(\Omega))_{[\theta]} =  
  (H^{s_0}(\Omega), H^{s_1}(\Omega))_{\theta,2} = H^s(\Omega) 
\end{equation}
for all $\theta \in (0,1)$ where $s= (1-\theta)s_0 + \theta s_1$, $s_0,s_1\in\R$.

Moreover, we define 
\begin{equation*}
H^1_n(\Omega)= \left\{\ue \in H^1(\Omega)^d: \vc{n}\cdot\ue|_{\partial\Omega}=0\right\}.  
\end{equation*}

The usual Besov spaces on a domain or a sufficiently smooth manifold are denoted by $B^{s}_{p,q}(\Omega)$, $B^{s}_{p,q}(M)$, resp., where $s\in\R$, $1\leq p,q\leq \infty$. For the convenience of the reader we recall that $B^s_{2,2}(\R^d)= H^s(\R^d)$ for all $s\in\R$ and
\begin{equation*}
  B^{s+\eps}_{p,\infty}(\Rd)\hookrightarrow B^s_{p,q_1}(\Rd)\hookrightarrow B^s_{p,q_2}(\R^d)\quad \text{for all}\ 1\leq q_1\leq q_2\leq \infty.
\end{equation*}
Moreover, we have the Sobolev type embeddings
  \begin{alignat*}{2}
    B^{s_1}_{p_1,q}(\Rn) &\hookrightarrow B^{s_0}_{p_0,q}(\Rn) &\qquad& \text{if}\ s_1\geq s_0 \ \text{and}\ s_1-\tfrac{n}{p_1}\geq s_0-\tfrac{n}{p_0},\\
    B^{d/p}_{p,1}(\Rn) &\hookrightarrow C^0_b(\Rn)
  \end{alignat*}
  for any $1\leq p,q\leq\infty$.
Finally, due to Hanouzet~\cite[Th\'eor\`eme 3]{HanouzetBesovProd} we have the useful product estimate
\begin{equation}\label{eq:BesovProd}
  \|fg\|_{H^1(\R^d)}\leq C_{p}\|f\|_{B^{d/p}_{p,1}(\R^d)}\|g\|_{H^1(\R^d)}
\end{equation}
for all $f\in B^{\frac{d}p}_{p,1}(\R^d),g\in H^1(\R^d)$ provided that  $2\leq p\leq \infty$, see also \cite[Theorem~6.6]{JohnsenPointwiseMultipliers}. 
All these results carry over to sufficiently smooth domains and $d$-dimensional manifolds.

Let $I=[0,T]$ with $0<T< \infty$ or let $I=[0,\infty)$ and let $X$ be a Banach
space. Then $BUC(I;X)$ is the Banach space of all bounded and uniformly continuous
$f\colon I\to X$ equipped with the supremum norm. The space of all (uniformly) H\"older continuous functions $f\colon I\to X$ of degree $s\in (0,1)$ is denoted by $C^s([0,T];X)$ normed in the standard way. 
Furthermore, we have the useful embedding
\begin{equation*}
  BUC([0,T];X_1)\cap C^{s}([0,T];X_0)\hookrightarrow C^{s(1-\theta)}([0,T];X),
\end{equation*}
where $0<s,\theta< 1$ provided that $\|f\|_{X}\leq C\|f\|_{X_0}^{1-\theta}\|f\|_{X_1}^\theta$ for all $f\in X_0\cap X_1$.

Finally, $f\in W^k_p(0,T;X)$, $1\leq p <\infty$, $k\in\N_0$, if and only if $f,\ldots,
\frac{d^kf}{dt^k}\in L^p(0,T;X)$, where $\frac{d^kf}{dt^k}$ denotes the $k$-th $X$-valued
distributional derivative of $f$.
 Furthermore, we set $H^1(0,T;X)= W^1_2(0,T;X)$ and for $s\in (0,1)$ we define $ H^s(0,T;X)= B^s_{2,2}(0,T;X)$, where $f\in B^s_{2,2}(0,T;X)$ if and only if $f\in L^2(0,T;X)$ and
 \begin{equation*}
   \|f\|_{B^s_{2,2}(0,T;X)}^2= \|f\|_{L^2(0,T;X)}^2 + \int_0^T\int_0^T\frac{\|f(t)-f(\tau)\|_{X}^2}{|t-\tau|^{2s+1}}\sd t\sd \tau <\infty.
 \end{equation*}
In the following we will use that
\begin{eqnarray*}
  \lefteqn{\int_0^T\int_0^T\frac{\|f(t)-f(\tau)\|_{X}^2}{|t-\tau|^{2s+1}}\sd t\sd \tau}\\
&\leq& \int_0^T\int_0^T|t-\tau|^{2(s'-s)-1}\sd t\sd \tau \|f\|_{C^{s'}([0,T];X)}
\leq C_{s',s}T^{2(s'-s)+1} \|f\|_{C^{s'}([0,T];X)}
\end{eqnarray*}
for all $0<s<s'\leq 1$, which implies
\begin{equation}\label{eq:VecHEstim}
  \|f\|_{H^{s}(0,T;X)}\leq C_{s,s'}T^{\frac12}\|f\|_{C^{s'}([0,T];X)}\quad \text{for all}\ f\in C^{s'}([0,T];X)
\end{equation}
provided that $0<s<s'\leq 1$. 
Finally, we set for $s\in (0,1)$
\begin{equation*}
  H^{\frac{s}2,s}(S_T)= L^2(0,T;H^s(\partial\Omega))\cap H^{\frac{s}2}(0,T;L^2(\partial\Omega)),
\end{equation*}
where $S_T=\partial\Omega\times (0,T)$ and $\Omega$ is a bounded domain with $C^1$-boundary.

Now let $X_0,X_1$ be Banach spaces such that $X_1\hookrightarrow X_0$ densely. Then
\begin{equation} 
  \label{eq:BUCEmbedding}
   W^1_p(I;X_0) \cap L^p(I;X_1) \hookrightarrow BUC(I;(X_0,X_1)_{1-\frac1p,p}), \qquad 1\leq p <\infty,
\end{equation}
continuously for $I=[0,T]$, $0<T<\infty$,  and $I=[0,\infty)$, cf. Amann~\cite[Chapter III, Theorem 4.10.2]{Amann}.

In order to solve the linearized system in the following, we will use the following abstract result:
\begin{thm}\label{thm:MaxReg} 
  Let $A\colon \mathcal{D}(A)\subset H\to H$ be a generator of a bounded analytic semigroup on a Hilbert space $H$ and let $1<q<\infty$. Then for every $f\in L^q(0,\infty;H)$ and $u_0 \in (H,\mathcal{D}(A))_{1-\frac1q,q}$ there is a unique $u\colon [0,\infty)\to H$ such that $\frac{du}{dt}, Au \in L^q(0,\infty;H)$ solving 
  \begin{eqnarray*}
    \frac{du}{dt}(t) + Au(t) &=& f(t)\quad\text{for all}\ t>0,\\
    u(0)&=& u_0.
  \end{eqnarray*}
Moreover, there is a constant $C_q>0$ independent of $f$ and $u_0$   such that 
  \begin{equation*}
    \left\|\frac{du}{dt}\right\|_{L^q(0,\infty;H)}+ \|Au\|_{L^q(0,\infty;H)} \leq C_q\left(\|f\|_{L^q(0,\infty;H)}+ \|u_0\|_{(H,\mathcal{D}(A))_{1-\frac1q,q}}\right).
  \end{equation*}
\end{thm}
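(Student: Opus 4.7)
The plan is the standard two-step decomposition for abstract parabolic Cauchy problems: write $u = u_1 + u_2$ where $u_2$ solves the homogeneous problem with initial data $u_0$, and $u_1$ solves the inhomogeneous problem with vanishing initial data, and control each piece separately.

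For the homogeneous part, I would set $u_2(t) = e^{-tA}u_0$, so that $\tfrac{d u_2}{dt} = -A e^{-tA}u_0$. Since $A$ generates a bounded analytic semigroup on $H$, the family $\{tAe^{-tA}\}_{t>0}$ is uniformly bounded in $\mathcal{L}(H)$, and a well-known characterization of the trace space identifies
\[
\bigl\{ u_0 \in H : Ae^{-tA}u_0 \in L^q(0,\infty;H) \bigr\} = (H,\mathcal{D}(A))_{1-\frac1q,q}
\]
with an equivalence of norms; this is precisely the content of Amann~\cite[Chapter~III, Theorem~4.10.2]{Amann} that the paper has already quoted in \eqref{eq:BUCEmbedding}. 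This delivers both the solvability and the required estimate for $u_2$.

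For the inhomogeneous part, I would define $u_1(t) = \int_0^t e^{-(t-s)A}f(s)\,ds$ by the variation-of-constants formula, extend $f$ by zero to all of $\R$, and prove the crucial bound
\[
\|Au_1\|_{L^q(0,\infty;H)} \leq C_q\, \|f\|_{L^q(0,\infty;H)}.
\]
The case $q=2$ is de~Simon's theorem: applying the Fourier transform in $t$ on $L^2(\R;H)$, one has $\widehat{Au_1}(\tau) = A(i\tau+A)^{-1}\hat f(\tau)$, and the family $\{A(i\tau+A)^{-1} : \tau\in\R\}$ is uniformly bounded in $\mathcal{L}(H)$ thanks to the sectoriality of $A$, so Plancherel's theorem gives the estimate. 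For general $1<q<\infty$, one uses the Hilbert-space feature that $R$-boundedness of a family of operators is equivalent to uniform norm-boundedness; the sectorial resolvent estimates therefore upgrade to $R$-boundedness of $\{\tau(i\tau+A)^{-1} : \tau\neq 0\}$, and Weis's operator-valued Mikhlin multiplier theorem yields the $L^q$-bound.

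The only genuinely non-trivial step is the inhomogeneous estimate, and that step is exactly where one exploits the Hilbert space structure: outside Hilbert spaces, $L^2$-maximal regularity need not imply $L^q$-maximal regularity, but within $H$ the equivalence of $R$-boundedness and boundedness makes the passage automatic. Uniqueness follows in the standard way from the estimate by linearity.
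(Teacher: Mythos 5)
Your proposal is correct and follows essentially the same route as the paper, which simply cites de Simon for the case $u_0=0$ and reduces the general case by subtracting an extension of the initial datum whose existence is guaranteed by Amann's trace theorem; your choice $u_2(t)=e^{-tA}u_0$, justified by the semigroup characterization of $(H,\mathcal{D}(A))_{1-\frac1q,q}$, is precisely such an extension. The only cosmetic difference is that for $q\neq 2$ you extrapolate via $R$-boundedness and Weis's operator-valued multiplier theorem, whereas de Simon's original argument uses Plancherel plus operator-valued Calder\'on--Zygmund theory; on a Hilbert space both are valid.
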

\begin{proof}
In the case $u_0=0$ the statement is the main result of \cite{DeSimonMaxReg}. The general case can be easily reduced to the case $u_0=0$ by subtracting a suitable extension. The existence of such an extension follows e.g. from \cite[Chapter~III, Theorem 4.10.2]{Amann}.
\end{proof}

\medskip

\noindent {\bf Weak Neumann Laplace equation:}
In the following we assume that $\Omega \subset\Rd$ is a bounded domain with $C^{0,1}$-boundary.
Given $f\in L^1(\Omega)$, we denote by 
$
  m(f) = \frac1{|\Omega|}\int_\Omega f(x) \sd x
$
its mean value. Moreover, for $m\in\R$ we set
\begin{equation*}
  L^q_{(m)}(\Omega):=\{f\in L^q(\Omega):m(f)=m\}, \qquad 1\leq q\leq \infty.  
\end{equation*}
Then
$$
  P_0 f:= f-m(f)= f-\frac1{|\Omega|}\int_\Omega f(x) \sd x
$$
is the orthogonal projection onto $L^2_{(0)}(\Omega)$. 
Furthermore,
we define
\begin{equation*}
 \Hone\equiv\Hone (\Omega)= H^1(\Omega)\cap L^2_{(0)}(\Omega), \qquad (c,d)_{\Hone(\Omega)} := (\nabla c,\nabla d)_{L^2(\Omega)}.  
\end{equation*}
Then $\Hone(\Omega)$ is a Hilbert space due to Poincar\'e's inequality
\begin{equation*}
  \|f-m(f)\|_{L^p(\Omega)} \leq C_p\|\nabla f\|_{L^p(\Omega)},
\end{equation*}
where $1\leq p <\infty$.
Moreover, let $\Hzero\equiv\Hzero(\Omega)= \Hone(\Omega)'$. 
Then the \emph{weak Neumann-Laplace operator} $\Delta_N\colon \Hone(\Omega)\to \Hzero(\Omega)$ is defined by
\begin{equation*}
  -\weight{\Delta_N u,\varphi}_{\Hone,\Hzero} =(\nabla u,\nabla \varphi)\qquad \text{for all}\ \varphi \in \Hone(\Omega).
\end{equation*}
By the Lemma of Lax-Milgram, for every $f\in \Hzero(\Omega)$ there is a unique $u\in \Hone(\Omega)$ such that $-\Delta_N u=f$.
More precisely, $-\Delta_N$ coincides with the Riesz isomorphism $\mathcal{R}\colon \Hone(\Omega)\to \Hzero(\Omega)$ given by
\begin{equation*}
\weight{\mathcal{R} c,d}_{\Hzero,\Hone} = (c,d)_{\Hone}= (\nabla c,\nabla d)_{L^2}, \qquad
c,d\in \Hone(\Omega).
\end{equation*}
We equip $\Hzero(\Omega)$ with the inner product
\begin{equation}\label{eq:HzeroInnerProductId}
  (f,g)_{\Hzero} = (\nabla \Delta_N^{-1} f, \nabla \Delta_N^{-1} g)_{L^2} =(\Delta_N^{-1} f, \Delta_N^{-1} g)_{\Hone}. 
\end{equation}
In particular this implies the useful identity
\begin{equation}\label{eq:DeltaNId}
  ((-\Delta_N)f,g)_{\Hzero} = (f, g)_{L^2}\qquad \text{for all}\ f\in H^1_{(0)}(\Omega),g\in L^2_{(0)}(\Omega).  
\end{equation}
Moreover, we embed $\Hone(\Omega)$ and $L^2_{(0)}(\Omega)$ into $\Hzero(\Omega)$ in the standard way by defining
\begin{equation*}
  \weight{c,\varphi}_{\Hzero,\Hone} = \int_\Omega c(x) \varphi(x) \sd x\qquad \text{for all}\ \varphi\in \Hone(\Omega)
\end{equation*}
for $c\in L^2_{(0)}(\Omega)$.
This implies the useful interpolation inequality
\begin{equation}
  \label{eq:InterpolationInequality}
  \|f\|_{L^2}^2 = - (\nabla \Delta_N^{-1}f,\nabla f)_{L^2}\leq \|f\|_{\Hzero}\|f\|_{\Hone}\quad \text{for all}\ f\in\Hone(\Omega).
\end{equation}
Furthermore, if $\Delta_N u=f$ for some $f\in \Hzero(\Omega)$, then 
\begin{equation}\label{eq:WeakNeumannEstim} 
  \|u\|_{\Hone(\Omega)}\leq\|f\|_{\Hzero(\Omega)}.
\end{equation}
We note that, if $u\in H^1_{(0)}(\Omega)$ solves $\Delta_Nu=f$ for some $f\in  L^q_{(0)}(\Omega)$, $1<q<\infty$, and $\partial\Omega$ is of class $C^2$, then it follows from standard elliptic theory that $u\in W^2_q(\Omega)$, $\Delta u= f$ a.e. in $\Omega$, and $\partial_n u|_{\partial\Omega}=0$ in the sense of traces. 
If additionally $f\in W^1_q(\Omega)$ and $\partial\Omega\in C^3$, then $u\in W^3_q(\Omega)$. Moreover,
\begin{equation}\label{eq:W2qEstim}
  \|u\|_{W^{k+2}_q(\Omega)} \leq C_q \|f\|_{W^k_q(\Omega)} \qquad \text{for all}\ f\in W^k_q(\Omega)\cap L^q_{(0)}(\Omega), k=0,1,
\end{equation}
with a constant $C_q$ depending only on $1<q<\infty$, $\dm$, $k$, and $\Omega$.

Finally, we define $\Div_n\colon L^2(\Omega)\to \Hzero(\Omega)$ by
\begin{equation*}
  \weight{\Div_n f,\varphi}_{\Hzero(\Omega),\Hone(\Omega)}=-(f,\nabla\varphi)_{L^2(\Omega)}\qquad \text{for all}\ \varphi \in \Hone(\Omega).
\end{equation*}
Note that $\Delta_N u=\Div_n \nabla u$ for all $u\in \Hone(\Omega)$.
\medskip

\noindent
{\bf Helmholtz decomposition:} Recall that we have the orthogonal decomposition
\begin{eqnarray*}
  L^2(\Omega)^d&=& L^2_\sigma(\Omega)\oplus G_2(\Omega)\\
G_2(\Omega) &=& \left\{\nabla p\in L^2(\Omega): p\in H^1_{(0)}(\Omega)\right\}.
\end{eqnarray*}
Here $L^2_\sigma(\Omega)$ is the closure of $\{\ue \in C_0^\infty(\Omega)^d:\Div \ue =0\}$ in $L^2(\Omega)^d$. 
The Helmholtz projection $P_\sigma$ is the orthogonal projection onto $L^2_\sigma(\Omega)$.
We note that $P_{\sigma} f = f- \nabla p$, where $p \in H^1_{(0)}(\Omega)$ is the solution of the weak Neumann problem 
\begin{equation}\label{eq:WeakHelmholtz}
  (\nabla p,\nabla \varphi)_{\Omega} = (f, \nabla \varphi)\quad \text{for all}\ \varphi \in C^\infty_{(0)}(\ol{\Omega}).
\end{equation}
We refer to  Simader and Sohr~\cite{SimaderSohr} and Sohr~\cite[Chapter II, Section 2.5]{BuchSohr} for details.

We conclude this section with two technical results related to the Navier boundary condition \eqref{eq:LT5}, which will be needed in Section~\ref{sec:LinearPart}.
\begin{lem}\label{lem:TraceOp}
  Let $\Omega\subset\R^d$ be a bounded domain with $C^k$-boundary, $k\geq 2$. Then there is a first order tangential differential operator $A=\sum_{j=1}^{d}a_j(x)\partial_{\tau_j}$, $a_j\in C^{k-1}(\partial\Omega)$, such that
  \begin{equation}\label{eq:TraceId}
    (\vc{n}\cdot\nabla^2 u)_\tau|_{\partial\Omega}= \nabla_\tau \gamma_1 u + A\gamma_0 u \qquad \text{for all}\ u\in H^2(\Omega),
  \end{equation}
  where $\gamma_j u = \partial_n^j u|_{\partial\Omega}$.
\end{lem}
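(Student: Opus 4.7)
My plan is to first establish the identity by a direct calculation for $u\in C^2(\overline\Omega)$ and then extend to $H^2(\Omega)$ by density. The algebraic idea is that the Hessian applied to $\vc{n}$ can be rewritten, by swapping the order of differentiation, as a tangential derivative of $\gamma_1 u$ plus correction terms built only from $\vc{n}$ and its tangential derivatives.

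I would first extend $\vc{n}$ to a $C^{k-1}$-vector field in a tubular neighborhood $U$ of $\partial\Omega$ normalized so that $|\vc{n}|\equiv 1$ on $U$, for instance by setting $\vc{n}=\nabla d$ for the signed distance $d$ to $\partial\Omega$ (which is $C^k$ since $\partial\Omega$ is $C^k$). The unit-length normalization will be crucial below. For $u\in C^2(\overline\Omega)$ the product rule gives
\begin{equation*}
  n_k\partial_k\partial_l u=\partial_l(n_k\partial_k u)-(\partial_l n_k)\partial_k u=\partial_l(\vc{n}\cdot\nabla u)-(\partial_l\vc{n})\cdot\nabla u.
\end{equation*}
Projecting the index $l$ onto the tangent space and restricting to $\partial\Omega$, the first term becomes $\nabla_\tau\gamma_1 u$, because $\vc{n}\cdot\nabla u|_{\partial\Omega}=\gamma_1 u$ and the tangential gradient of a function on $\partial\Omega$ depends only on its boundary values. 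Hence
\begin{equation*}
  (\vc{n}\cdot\nabla^2 u)_\tau|_{\partial\Omega}=\nabla_\tau\gamma_1 u-\bigl((\partial_{\tau_l}\vc{n})\cdot\nabla u\bigr)_{l=1}^d\Big|_{\partial\Omega}.
\end{equation*}

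Next I would decompose $\nabla u|_{\partial\Omega}=(\gamma_1 u)\vc{n}+\nabla_\tau\gamma_0 u$. The normalization $|\vc{n}|^2\equiv 1$ yields $\vc{n}\cdot\partial_{\tau_l}\vc{n}=\tfrac12\partial_{\tau_l}|\vc{n}|^2=0$, so the contribution of the normal piece $(\gamma_1 u)\vc{n}$ to the correction term drops out. Only the tangential part survives, giving
\begin{equation*}
  (A\gamma_0 u)_l:=-\sum_{m=1}^d (\partial_{\tau_l}n_m)\,\partial_{\tau_m}\gamma_0 u,\qquad l=1,\dots,d,
\end{equation*}
i.e.\ $A=\sum_{m=1}^d a_m\,\partial_{\tau_m}$ with vector-valued coefficients $a_m=-\nabla_\tau n_m$, which are of class $C^{k-1}(\partial\Omega)$ by the choice of extension. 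This proves the identity for all $u\in C^2(\overline\Omega)$.

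Finally I would pass from $C^2(\overline\Omega)$ to $H^2(\Omega)$ by density: $C^\infty(\overline\Omega)$ is dense in $H^2(\Omega)$ since $\partial\Omega$ is in particular Lipschitz, and the right-hand side depends continuously on $u$ as a map $H^2(\Omega)\to H^{-1/2}(\partial\Omega)^d$ via $\gamma_0\colon H^2(\Omega)\to H^{3/2}(\partial\Omega)$ and $\gamma_1\colon H^2(\Omega)\to H^{1/2}(\partial\Omega)$. The main subtlety I foresee is exactly this last point: for $u\in H^2(\Omega)$ the Hessian $\nabla^2 u$ lies only in $L^2(\Omega)$ and has no direct trace, so the left-hand side of \eqref{eq:TraceId} has to be interpreted as the unique continuous extension of the classical expression from smooth $u$; the density argument together with the continuity of the right-hand side is what guarantees that this extension is well defined and independent of the approximating sequence.
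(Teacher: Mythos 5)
Your proof is correct and follows essentially the same route as the paper: commute $\vc{n}\cdot\nabla$ past $\nabla$ via the product rule, identify the leading term with $\nabla_\tau\gamma_1 u$, and observe that the correction $(\partial_{\tau_j}\vc{n})\cdot\nabla u$ involves only tangential derivatives of $u$ because $\partial_{\tau_j}\vc{n}\cdot\vc{n}=\tfrac12\partial_{\tau_j}|\vc{n}|^2=0$. The extra care you take with the unit-length extension of $\vc{n}$, the density argument, and the interpretation of the left-hand side for $u\in H^2(\Omega)$ is a welcome elaboration of details the paper leaves implicit.
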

\begin{proof}
  Since $\nabla_\tau = (I-\vc{n}\otimes \vc{n})\nabla$, we obtain
  \begin{eqnarray*}
    (\vc{n}\cdot\nabla^2 u)_\tau|_{\partial\Omega}&=& \left.(I-\vc{n}\otimes \vc{n})(\vc{n}\cdot \nabla^2 u)\right|_{\partial\Omega}=
(I-\vc{n}\otimes \vc{n})\nabla(\partial_n u) -\sum_{j=1}^d (\partial_{\tau_j}\vc{n})\cdot\nabla u 
  \end{eqnarray*}
  for all $u\in H^2(\Omega)$. Since $\partial_{\tau_j}\vc{n}\cdot \vc{n} = \frac12 \partial_{\tau_j}|\vc{n}|^2=0$, $\partial_{\tau_j}\vc{n}\in C^{k-1}(\partial\Omega)$ is tangential. Therefore \eqref{eq:TraceId} is valid.
\end{proof}

\begin{lem}\label{lem:Extension}
   Let $\Omega\subset\R^d$ be a bounded domain with $C^2$-boundary, $0<T\leq \infty$, $\nu\in C^1(\R)$ with $\inf_{s\in\R}\nu(s)>0$, and $c_0\in H^2(\Omega)$, $d=2,3$.
   \begin{enumerate}
   \item There is a bounded linear operator $E\colon H^{\frac12}(\partial\Omega)^d\to H^2(\Omega)^d$ such that
   \begin{eqnarray*}
     \left.(\vc{n}\cdot 2\tn{D}E \vc{a})_\tau\right|_{\partial\Omega}&=& \vc{a}_\tau, \quad
E \vc{a}|_{\partial\Omega} =0, \quad
     \Div E \vc{a}= 0
   \end{eqnarray*}
   for all $\vc{a}\in H^{\frac12}(\partial\Omega)^d$. Moreover, there is a constant $C>0$ such that $\|Ea\|_{H^1(\Omega)}\leq C\|a\|_{H^{-\frac12}(\partial\Omega)}$ for all $a\in H^{\frac12}(\partial\Omega)^d$.
 \item There is a bounded linear operator 
$$
E_T\colon H^{\frac14,\frac12}(S_T)^d\to L^2(0,T;H^2(\Omega))^d\cap H^1(0,T;L^2_\sigma(\Omega))
$$ 
such that 
   \begin{eqnarray*}
     \left.(\vc{n}\cdot 2\nu(c_0)\tn{D}E_T \vc{a})_\tau\right|_{\partial\Omega}&=& \vc{a}_\tau, \quad
E_T \vc{a}|_{\partial\Omega} =0, \quad
     \Div E_T \vc{a}= 0,\quad E_T \vc{a}|_{t=0}=0
   \end{eqnarray*}
   for all $\vc{a}\in H^{\frac14,\frac12}(S_T)^d$. Moreover, the operator norm of $E_T$ can be estimated independently of $0<T\leq \infty$.
   \end{enumerate}
\end{lem}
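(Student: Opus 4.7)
For any $\ue\in H^2(\Omega)^d$ with $\ue|_{\partial\Omega}=0$, every tangential derivative of $\ue$ vanishes on $\partial\Omega$, so $\nabla\ue|_{\partial\Omega}=\vc{n}\otimes\partial_n\ue|_{\partial\Omega}$ and a direct computation gives
$(\vc{n}\cdot 2\tn{D}\ue)_\tau|_{\partial\Omega}=(\partial_n\ue)_\tau|_{\partial\Omega}$; moreover $\Div\ue=0$ in $\Omega$ forces $\vc{n}\cdot\partial_n\ue|_{\partial\Omega}=0$, which is consistent with $\vc{a}_\tau$ being tangential. The problem in (a) therefore reduces to finding a divergence-free $\ue\in H^2(\Omega)^d$ with $\ue|_{\partial\Omega}=0$ and $\partial_n\ue|_{\partial\Omega}=\vc{a}_\tau$. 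My plan is a two-step construction. First I pick a bounded right-inverse $R$ of the double trace $\ve\mapsto(\ve|_{\partial\Omega},\partial_n\ve|_{\partial\Omega})$ from $H^2(\Omega)^d$ onto $H^{3/2}(\partial\Omega)^d\times H^{1/2}(\partial\Omega)^d$, set $\tilde\ue:=R(0,\vc{a}_\tau)$ and let $f:=\Div\tilde\ue\in H^1(\Omega)$. Since $\tilde\ue|_{\partial\Omega}=0$, the divergence theorem gives $m(f)=0$; and since tangential derivatives of $\tilde\ue$ vanish on $\partial\Omega$ while $\partial_n\tilde\ue\cdot\vc{n}=\vc{a}_\tau\cdot\vc{n}=0$, we have $f|_{\partial\Omega}=0$, so $f\in H^1_0(\Omega)\cap L^2_{(0)}(\Omega)$. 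I then invoke the Bogovski\v{\i} operator $B$, which by a classical result (see Galdi's monograph) is bounded $W^{k,p}_0(\Omega)\cap L^p_{(0)}(\Omega)\to W^{k+1,p}_0(\Omega)^d$ for all $k\geq 0$, $1<p<\infty$. Applied to $f$ it produces $\ue_2=Bf\in H^2_0(\Omega)^d$ with $\Div\ue_2=f$, $\ue_2|_{\partial\Omega}=0$ and $\partial_n\ue_2|_{\partial\Omega}=0$. Setting $E\vc{a}:=\tilde\ue-\ue_2$ delivers all three required properties with $\|E\vc{a}\|_{H^2}\leq C\|\vc{a}\|_{H^{1/2}}$. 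For the auxiliary estimate $\|E\vc{a}\|_{H^1}\leq C\|\vc{a}\|_{H^{-1/2}}$, I would take $R$ instead to be the solution operator of a suitable higher-order elliptic problem (e.g.\ the biharmonic Dirichlet--Neumann problem), so that transposition yields continuity $R\colon H^{-1/2}(\partial\Omega)^d\to H^1(\Omega)^d$; combined with the analogous extension of $B$ from $H^{-1}\cap L^2_{(0)}\to L^2$, this closes the weaker-norm estimate, with the full $H^{1/2}$-case recovered by interpolation.

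\textbf{Construction for (b).} The gain from $H^{1/4}$ in time (for $\vc{a}$) to $H^1$ in time (for $E_T\vc{a}$) precludes a pointwise-in-time application of (a) and calls for genuinely parabolic lifting. My plan is to perform the construction on all of $\R$ first and then restrict. I extend $\vc{a}$ by zero to $\partial\Omega\times(-\infty,T)$; this preserves the $L^2(H^{1/2})$-norm trivially and the $H^{1/4}(L^2)$-norm because $\tfrac14<\tfrac12$, so that $H^{1/4}_0(-\infty,T;L^2)=H^{1/4}(-\infty,T;L^2)$ and extension by zero is bounded with constant independent of $T$. Next I use that $L^2(\R;H^{1/2}(\partial\Omega))\cap H^{1/4}(\R;L^2(\partial\Omega))$ is precisely the normal-derivative trace space of $X:=L^2(\R;H^2(\Omega))\cap H^1(\R;L^2(\Omega))$, and apply a bounded right-inverse of the double trace on $X$ (constructed, e.g., by solving a vector heat equation on $\R\times\Omega$) to lift $(0,\vc{a}_\tau/\nu(c_0|_{\partial\Omega}))$ to $\tilde\ue_1\in X^d$ with $\tilde\ue_1\equiv 0$ for $t<0$. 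I correct the divergence pointwise in time by $\ue_2(t):=B(\Div\tilde\ue_1(t))$; the continuity $B\colon H^1_0\cap L^2_{(0)}\to H^2_0$ yields $\ue_2\in L^2(\R;H^2(\Omega))^d$, while the extension $B\colon H^{-1}\cap L^2_{(0)}\to L^2$ together with $\partial_t(Bf)=B(\partial_t f)$ yields $\ue_2\in H^1(\R;L^2(\Omega))^d$. Restricting $E_T\vc{a}:=\tilde\ue_1-\ue_2$ to $[0,T]$ satisfies all required conditions (including $E_T\vc{a}|_{t=0}=0$, inherited from the zero-extension), and the operator norm is $T$-independent since the whole construction lives on $\R$.

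\textbf{Main obstacle.} The delicate point is the fine mapping properties of Bogovski\v{\i}'s operator: one needs $B\colon H^1_0\cap L^2_{(0)}\to H^2_0$ (so that the Neumann trace is preserved and the correction does not spoil $\partial_n\tilde\ue|_{\partial\Omega}=\vc{a}_\tau$), and for (b) the extension $B\colon H^{-1}\cap L^2_{(0)}\to L^2$ (so that the $H^1$-in-time regularity survives the pointwise correction). Both are classical consequences of Bogovski\v{\i}'s explicit singular-integral formula but must be invoked with care; similarly, the parabolic trace right-inverse, standard but nontrivial, should be cited via Lions--Magenes-type results on anisotropic Sobolev spaces.
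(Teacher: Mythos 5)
Your proposal is correct and follows essentially the same route as the paper: lift the double trace $(0,\vc{a}_\tau)$ to $\tilde\ue$ (so that $(\vc{n}\cdot 2\tn{D}\tilde\ue)_\tau|_{\partial\Omega}=\vc{a}_\tau$ and $\Div\tilde\ue\in H^1_0(\Omega)\cap L^2_{(0)}(\Omega)$), then subtract the Bogovski\u{\i} correction $B(\Div\tilde\ue)\in H^2_0(\Omega)^d$, and in part (b) obtain the $H^1$-in-time bound from the negative-order mapping property of $B$ (Gei\ss ert--Heck--Hieber) combined with $\partial_t B=B\partial_t$, exactly as in the paper. The only differences are cosmetic: the paper cites a ready-made space-time trace lifting with $T$-independent constant instead of building it via zero-extension in time plus a heat-equation lift, and for the $H^1(\Omega)$-estimate in part (a) the relevant Bogovski\u{\i} property is simply $B\colon L^2_{(0)}\to H^1_0$ rather than the $H^{-1}\to L^2$ version you mention.
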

\begin{proof}
To prove the first part let $\tilde{\vc{A}}=\tilde{E}\vc{a}\in H^2(\Omega)^d$ such that $\tilde{\vc{A}}|_{\partial\Omega}=0$, $\partial_n \tilde{\vc{A}}|_{\partial\Omega}=\vc{a}_\tau$ and $\|\tilde{\vc{A}}\|_{H^2(\Omega)}\leq C \|\vc{a}\|_{H^{\frac12}(\partial\Omega)}$, $\|\tilde{\vc{A}}\|_{H^1(\Omega)}\leq C \|\vc{a}\|_{H^{-\frac12}(\partial\Omega)}$ for all $\vc{a}\in H^{\frac12}(\partial\Omega)^d$.
If $\Omega=\R^{n-1}\times (0,\infty)$, the existence of such an extension operator $\tilde{E}$ follows e.g. from McLean~\cite[Lemma 3.36]{McLean}. From this the result for a general bounded $C^2$-domain follows by standard localization techniques. 
 
Then we have
  \begin{alignat*}{1}
    (\vc{n}\cdot 2 \tn{D}\tilde{\vc{A}})_\tau|_{\partial\Omega}&= (\nabla_\tau \tilde{\vc{A}}_n  + \partial_n \tilde{\vc{A}}_\tau)|_{\partial\Omega}= 0+\vc{a}_\tau,\\
    \Div \tilde{\vc{A}}|_{\partial\Omega}&= (\Div_\tau \tilde{\vc{A}} + \partial_n \tilde{\vc{A}}_n)|_{\partial\Omega} = 0.    
  \end{alignat*}
Since $\Div \tilde{\vc{A}}|_{\partial\Omega}=0$, $\Div \tilde{\vc{A}}\in H^1_0(\Omega)\cap L^2_{(0)}(\Omega)$ and we can apply the Bogovski-Operator $B$, cf. e.g. \cite{Galdi}, to $\Div \tilde{\vc{A}}$. Hence we obtain $B(\Div \tilde{\vc{A}})\in H^2_0(\Omega)$, $\Div B(\Div\tilde{\vc{A}})=\Div \tilde{\vc{A}} $, and
\begin{alignat*}{1}
  \|B(\Div \tilde{\vc{A}})\|_{H^2(\Omega)}&\leq C\|\tilde{\vc{A}}\|_{H^2(\Omega)}\leq C'\|\vc{a}\|_{H^{\frac12}(\partial\Omega)},\\
  \|B(\Div \tilde{\vc{A}})\|_{H^1(\Omega)}&\leq C\|\tilde{\vc{A}}\|_{H^1(\Omega)}\leq C'\|\vc{a}\|_{H^{-\frac12}(\partial\Omega)}
\end{alignat*}
for all $\vc{a}\in H^{\frac12}(\partial\Omega)$.
Therefore $A:= \tilde{A}- B(\Div \tilde{A})$ has the properties stated above.

Finally, because of \cite[Lemma 2.4]{SolonnikovProceedings}, for every $\vc{a}\in H^{\frac14,\frac12}(S_T)^d$ there is some $\vc{A}\in L^2(0,T;H^1(\Omega))^d\cap H^1(0,T;L^2(\Omega))^d$ such that $\left.(\vc{n}\cdot \nu(c_0)\tn{D}\vc{A})_\tau\right|_{\partial\Omega}
  = \vc{a}_\tau$, $\vc{A}|_{t=0}=0$, $\Div \vc{A}|_{\partial\Omega} =0$, $\vc{A}|_{\partial\Omega}=0$. Moreover, the extension can be chosen such that 
$$
\|\vc{A}\|_{L^2(0,T;H^2)}+\|\vc{A}\|_{H^1(0,T;L^2)}\leq C\|\vc{a}\|_{H^{\frac14,\frac12}(S_T)}
$$ with $C$ independent of $T$ and $\vc{a}$. Analogously to the first part $\Div \vc{A}\in L^2(0,T;H^1_0(\Omega))^d$. Hence $E_T\vc{a}:=\vc{A}-B(\Div \vc{A})\in L^2(0,T;H^2_0(\Omega))^d$. Moreover, due to \cite[Theorem~2.5]{GeissertHeckHieberBogovski} we also have
\begin{equation*}
  \|B(\Div \vc{A})\|_{H^1(0,T;L^2(\Omega))}\leq C\|\Div_n\vc{A}\|_{H^1(0,T;\Hzero(\Omega))}\leq C'\|\vc{A}\|_{H^1(0,T;L^2(\Omega))}, 
\end{equation*}
where $C,C'>0$ are independent of $T$. Altogether $E_T$ has the stated properties.
\end{proof}

\section{Short Time Existence of Strong Solutions}\label{sec:Strong}

In this section we prove existence of strong and unique solutions of system
\eqref{eq:LT1}-\eqref{eq:LT42} and \eqref{eq:LT5}-\eqref{eq:LT7} locally in time in the case $a(c)\equiv m(c)\equiv 1$, i.e., prove Theorem~\ref{thm:main}.   
 As  noted before we will assume that $\beta \neq 0$ since in the case $\beta=0$ the
 linearized system is completely different and short time existence of strong solutions is known in that case, cf. e.g. \cite{ModelH}. In this case we can eliminate the generalized pressure $g_0$ and the chemical potential $\mu$ as follows:

First of all, because of \eqref{eq:rho}, one easily calculates that
\begin{equation}\label{eq:SimpleMixId}
  \frac{\partial \rho}{\partial c} = -\beta \rho^2,\qquad \frac{\partial(\rho c)}{\partial c}= \rho + \frac{\partial
  \rho}{\partial c} c = \alpha \rho^2.
\end{equation}
For the following let $c$ be a sufficiently smooth solution such that $|c(t,x)|\leq 1+\eps_0$, where $\eps_0>0$ is as in Assumption~\ref{assump:3}.
Then (\ref{eq:LT2}) and (\ref{eq:SimpleMixId}) imply
\begin{equation}\label{eq:MaterialDer}
  -\beta \rho^2( \partial_t c + \ve\cdot\nabla c)= - \rho\Div \ve. 
\end{equation}
Combining this with (\ref{eq:LT3}), we obtain the simple identity
\begin{equation}\label{eq:DivvMuId}
  \Div \ve = \beta \Delta \mu_0.
\end{equation}
Thus
 $\mu_0 = \beta^{-1}\Delta_N^{-1} \Div \ve= \beta^{-1}G(\Div \ve)$ since $\vc{n}\cdot \nabla \mu_0|_{\partial\Omega}=0$, where $G(g)$ is 
defined by
\begin{alignat}{2}\label{eq:GDefn1}
  \Delta G(g) &= g &\qquad&\text{in} \ \Omega, \\\label{eq:GDefn2}
  \partial_n G(g) &= 0&&\text{on} \ \partial \Omega,
\end{alignat}
and $\int_\Omega G(g) \sd x=0$. Note that this implies 
\begin{equation}\label{eq:NablaGv}
\nabla G(\Div \ve)= (I-P_\sigma) \ve.   
\end{equation}
Using this and (\ref{eq:LT4}), we can eliminate $g_0$, $\mu_0$ and $\ol{p}(t)$ from the system \eqref{eq:LT1}-\eqref{eq:LT4} and obtain the equivalent system 
 \begin{alignat}{2}\nonumber
   \partial_t \ve +  \ve\cdot \nabla \ve-\rho^{-1}\Div \tn{S}(c,\tn{D}\ve)\\\label{eq:LT1'}
 +\frac1{\beta} \nabla
   \left(\rho^{-2}(\Delta c-\phi(c))\right)&=
 \frac1\beta  G(\Div\ve)\nabla c -\frac1{\beta^2}\nabla
   (\rho^{-1}G(\Div\ve) ) &\ \ &\text{in}\ Q_T,\\\label{eq:LT2'}
    \rho \partial_t c+ \rho \ve\cdot \nabla c &= \beta^{-1} \Div \ve. && \text{in}\ Q_T,
 \end{alignat}
 together with
 \begin{alignat}{2}\label{eq:LT3'}
    \vc{n}\cdot \ve|_{\partial\Omega}= \left.(\vc{n}\cdot \tn{S}(c,\tn{D}\ve))_\tau + \gamma(c) \ve_\tau\right|_{\partial\Omega}= \partial_\normal c|_{\partial\Omega} &=0&\qquad&\text{on}\
   S_T,\\\label{eq:LT4'}
  (\ve,c)|_{t=0}&= (\ve_0,c_0)&\qquad&\text{in}\ \Omega.
 \end{alignat} 
This is indeed an equivalent system since, if $(\ve,c)$ solve the system above, we can simply \emph{define} $g_0$ and $\ol{p}(t)$ by the equation (\ref{eq:LT4}) and $\mu_0$ by $\mu_0 = \beta^{-1}G(\Div \ve)$. Then $\vc{n}\cdot \nabla\mu_0|_{\partial\Omega}$ is automatically satisfied.

We will construct strong solutions by linearizing the
system, proving that the associated linear operator is
an isomorphism between suitable $L^2$-Sobolev spaces, and  applying the
contraction mapping principle to prove existence and uniqueness of the full
system for sufficiently small times. 
 
To this end, let $\tilde{c}_0\in H^1(0,T_0;H^1(\Omega))\cap L^2(0,T_0;H^3(\Omega)\cap H^2_N(\Omega))$ be such that $\tilde{c}_0|_{t=0} = c_0$. The existence of such an $\tilde{c}_0$ follows from Theorem~\ref{thm:LinearPart} below. 
Then \eqref{eq:LT1'}-\eqref{eq:LT4'} are equivalent to
\begin{equation}\label{eq:AbstractEq}
  L(c) 
  \begin{pmatrix}
    \ve\\ \rho (c-\tilde{c}_0)
  \end{pmatrix}
  = \mathcal{F}(\ve,c),
\end{equation}
where for given $c$ the linear operator $L(c)\colon X_T \to Y_T$ is defined by
\begin{eqnarray*}
  L(c) \begin{pmatrix}
    \ve\\ c'
  \end{pmatrix} &=& 
  \begin{pmatrix}
    \partial_t \ve - \Div \widetilde{\tn{S}}(c,\tn{D}\ve) + \frac{1}{\beta\alpha}\nabla \Div (\rho^{-4} \nabla c')\\
     \partial_t c' - \beta^{-1} \Div \ve \\
    \left.(\vc{n}\cdot \widetilde{\tn{S}}(c,\tn{D}(P_\sigma\ve)))_\tau + \tilde{\gamma}(c) (P_\sigma \ve)_\tau\right|_{\partial\Omega}\\
    (\ve,c)|_{t=0}
  \end{pmatrix},\qquad \begin{pmatrix}
    \ve\\ c'
  \end{pmatrix}\in X_T,
  \end{eqnarray*}
  and $c'$ corresponds to $\rho c$.
  Here we have used $\nabla (\rho c)= \alpha \rho^2\nabla c$ and set $\widetilde{\tn{S}}(c,\tn{D}\ve)= 2\tilde{\nu}(c)\tn{D}\ve+\tilde{\eta}(c) \Div \ve\, \tn{I}$,
  $\tilde{\nu}(c)= \hrho(c)^{-1}\nu(c)$, $\tilde{\eta}(c)=
  \hrho(c)^{-1}\eta(c)$, and $\tilde{\gamma}(c)=
  \hrho(c)^{-1}\gamma(c)$. Moreover, 
    $\mathcal{F}\colon X_T \to Y_T$ is a non-linear mapping defined by
  \begin{eqnarray*}
  \mathcal{F}(\ve,c)
  &=& \begin{pmatrix}
    F_1(\ve,c)\\
    -\rho \ve\cdot \nabla  c - \rho\partial_t\tilde{c}_0  + \partial_t \rho (c-\tilde{c}_0)  \\
\left.(\vc{n}\cdot \widetilde{\tn{S}}(c,\nabla^2 G(\Div \ve)))_\tau + \tilde{\gamma}(c) \nabla_\tau G(\Div \ve)\right|_{\partial\Omega}    
\\
    (\ve_0,0)
    \end{pmatrix},\\
    F_1(\ve,c)&=& \frac{ G(\Div\ve) \nabla c}\beta + \frac1 \beta\nabla
    \left(\frac{\pot(c)}{\rho^2}- \frac{G(\Div \ve)}{\beta^2 \rho}-[\rho^{-2},\Div]\nabla c\right)\\
&&+ \frac{1}\beta \nabla (\rho^{-2}\Delta {c}_0) - \ve\cdot \nabla \ve - \nabla \rho^{-1}\cdot \tn{S}(c,\tn{D}\ve),
\end{eqnarray*}
and $ X_T =  X_T^1\times X_T^2$,
\begin{eqnarray*}
X_T^1 &=&\left\{\ue\in H^1(0,T;L^2(\Omega)^d)\cap L^2(0,T;H^2(\Omega)^d): \vc{n}\cdot \ue|_{\partial\Omega}=0 \right\},\\
    X_T^2&=& \left\{ c'\in H^1(0,T;H^1(\Omega))\cap L^2(0,T;H^3(\Omega)): c'|_{t=0}=0, \vc{n}\cdot \nabla c'|_{\partial\Omega}=0\right\},\\ 
  Y_T &=& 
    L^2(Q_T)^d  \times  L^2(0,T;H^1_{0}(\Omega))\times
    \{\vc{a}\in H^{\frac14,\frac12}(S_T): \vc{a}_n =0\}\times
    H^1_n(\Omega)\times H^2_N(\Omega),
\end{eqnarray*}
where $H^{\frac14,\frac12}(S_T):=H^{\frac14}(0,T;L^2(\partial\Omega))\cap L^2(0,T;H^{\frac12}(\partial\Omega))$
Here $[A,B]= AB-BA$ denotes the commutator of two operators. 
The spaces $X_T^1$, $X_T^2$, and $Y_T$ are normed by
\begin{eqnarray*}
  \|\ve\|_{X_T^1} &=& \left\|\left(\partial_t \ve, \nabla^2 \ve\right)\right\|_{L^2(Q_T)} + \|\ve|_{t=0}\|_{H^1(\Omega)},\\
  \|c'\|_{X_T^2} &=& \left\|\left(c', \partial_tc',\partial_t \nabla c', \nabla^3 c'\right)\right\|_{L^2(Q_T)}+\|c'|_{t=0}\|_{H^2(\Omega)},\\
\|(\vc{f},g,\vc{a},\ve_0)\|_{Y_T} &=& \|(\vc{f},\nabla g)\|_{L^2(Q_T)}+\|\vc{a}\|_{H^{\frac14,\frac12}(S_T)}+ \|\ve_0\|_{H^1(\Omega)}+ \|c_0\|_{H^2(\Omega)}.
\end{eqnarray*}

In order to apply the contraction mapping principle to (\ref{eq:AbstractEq})
for sufficiently small $T>0$,
it is essential that $L(c_0)$ is an isomorphism:
\begin{thm}\label{thm:LinearPart}
  Let $c_0\in H^2(\Omega)$, let $T_0>0$, and let Assumption~\ref{assump:3}
  hold true. Then  $L(c_0)\colon X_T \to Y_T$ is an isomorphism for every $0<T\leq T_0$ and there is a constant $C(T_0)>0$  such that
  \begin{equation}\label{eq:InvEstim}
    \|L(c_0)^{-1}\|_{\mathcal{L}(Y_T,X_T)} \leq C(T_0) \qquad \text{for all}\ 0<T\leq T_0.
  \end{equation}
\end{thm}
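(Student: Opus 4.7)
The plan is to decouple the linearized system by reducing it to a closed damped fourth-order evolution equation for $c'$ alone, solve that equation by combining the abstract generator result of Chen and Triggiani with maximal $L^2$-regularity (Theorem~\ref{thm:MaxReg}), and then reconstruct $\ve$ via the Helmholtz splitting and a parabolic Stokes-Navier problem for $P_\sigma\ve$. As a first reduction I use Lemma~\ref{lem:Extension}(b) to lift the inhomogeneous Navier trace $\vc{a}\in H^{\frac14,\frac12}(S_T)$ to a divergence-free element of $X_T^1$ with $T$-uniform bound, and I lift $\ve_0\in H^1_n(\Omega)$ via the Stokes-Navier semigroup; subtracting these extensions reduces to $\vc{a}=0$ and $\ve_0=0$. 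The uniformity of all subsequent bounds in $T\le T_0$ stems from the fact that the generators involved are autonomous (depending only on $c_0$), so the maximal regularity constants are $T$-independent after extending data by zero to $[0,\infty)$.

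\textbf{The scalar equation for $c'$.} Applying $\Div$ to the first equation and substituting $\Div\ve=\beta(\partial_t c'-f_2)$ from the second, together with the identity
\begin{equation*}
  \Div\Div\widetilde{\tn{S}}(c_0,\tn{D}\ve)=(2\tilde\nu(c_0)+\tilde\eta(c_0))\,\Delta\Div\ve+\text{(lower order)},
\end{equation*}
produces the damped plate equation
\begin{equation*}
  \partial_t^2 c'-\Delta\bigl(a(c_0)\,\partial_t c'\bigr)+\tfrac{1}{\alpha\beta^2}\,\Delta\Div\bigl(\rho_0^{-4}\nabla c'\bigr)=F
\end{equation*}
with $a(c_0)=2\tilde\nu(c_0)+\tilde\eta(c_0)>0$, subject to $\partial_n c'|_{\partial\Omega}=0$ and appropriate initial data inherited from $c'|_{t=0}=0$ and the original first-order system. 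Rewriting as a first-order system $\partial_t U+A_1 U=F_0$ for $U=(c',\partial_t c')^\top$ on the Hilbert space $H:=H^1_{(0)}(\Omega)\times H^{-1}_{(0)}(\Omega)$, the stiffness $A:=-\tfrac{1}{\alpha\beta^2}\Delta\Div(\rho_0^{-4}\nabla\cdot)$ with Neumann realization is self-adjoint positive of order four on $H^{-1}_{(0)}(\Omega)$ with $\mathcal{D}(A^{1/2})=H^1_{(0)}(\Omega)$, while the damping $B:=-\Delta(a(c_0)\cdot)$ is of order two and thus comparable to $A^{1/2}$. The abstract theorem of Chen and Triggiani~\cite{ChenTriggiani} then gives that $A_1$ generates a bounded analytic semigroup on $H$, and Theorem~\ref{thm:MaxReg} with $q=2$ produces a unique $c'\in X_T^2$ with the claimed estimate.

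\textbf{Recovery of $\ve$ and main obstacle.} With $c'$ at hand the curl-free part of the velocity is prescribed directly by $(I-P_\sigma)\ve=\nabla G(\beta(\partial_t c'-f_2))$, which lies in $X_T^1$ by the elliptic estimate \eqref{eq:W2qEstim} for $G$, while $\we:=P_\sigma\ve$ solves an autonomous parabolic Stokes-Navier system with homogeneous Navier boundary condition, zero initial value, and an $L^2$-source built from $\vc{f}_1$ and from $c'$. The associated Stokes-Navier generator on $L^2_\sigma(\Omega)$ is self-adjoint positive under Assumption~\ref{assump:3} (via Korn's inequality in the Navier setting, which is where the no-axis-of-symmetry hypothesis enters when $\gamma\equiv 0$), and a second application of Theorem~\ref{thm:MaxReg} gives $\we\in X_T^1$ with a $T$-uniform bound, completing surjectivity. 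Injectivity follows by running the same derivation with vanishing data. The main obstacle is the Chen-Triggiani step: one has to verify concretely that $A_1$, realized with Neumann conditions on the variable weight $\rho_0^{-4}$, is sectorial with spectral angle $<\pi/2$ in the chosen energy space, and to show that the lower-order remainders (commutators involving $\rho_0^{-4}$, the first-order tail of $\Div\widetilde{\tn{S}}$, and the $\tilde\gamma(c_0)$-boundary contribution) are subordinate to the graph norm of $A_1$ and can be absorbed by perturbation.
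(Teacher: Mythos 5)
Your overall toolkit is the right one -- Helmholtz splitting $\ve=\we+\nabla G(\Div\ve)$, the Chen--Triggiani theorem for the damped-plate structure in $(c',\Div\ve)$, maximal $L^2$-regularity on $[0,\infty)$ for $T$-uniform constants, and the self-adjoint Stokes--Navier operator with Korn's inequality for the solenoidal part. But the order in which you decouple the system is wrong, and this is a genuine gap, not a presentational one. Your ``closed'' scalar equation for $c'$ is not closed. Writing $\ve=\we+\nabla G(g)$ with $g=\Div\ve$ and computing
\begin{equation*}
  \Div\Div\bigl(2\tilde\nu(c_0)\tn{D}\we\bigr)=2\tilde\nu(c_0)\,\Delta\Div\we+2\nabla\tilde\nu(c_0)\cdot\Delta\we+2\nabla^2\tilde\nu(c_0):\tn{D}\we
  =2\nabla\tilde\nu(c_0)\cdot\Delta\we+2\nabla^2\tilde\nu(c_0):\tn{D}\we,
\end{equation*}
you see that the remainder you label ``lower order'' contains $\nabla\tilde\nu(c_0)\cdot\Delta\we$, i.e.\ the full second derivatives of the unknown solenoidal part. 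Since $\nabla\tilde\nu(c_0)$ is only in $H^1\hookrightarrow L^6$, this term is controlled in $H^{-1}_{(0)}(\Omega)$ only by $\|\we\|_{H^2}$ -- exactly the top-order regularity you have not yet produced. It is therefore neither a known source nor a perturbation subordinate to the graph norm of $A_1$ (which controls only $c'$ and $g$). Consequently you cannot solve for $c'$ first and ``reconstruct'' $\ve$ afterwards; the coupling $\we\to(c',g)$ is of full order whenever $\tilde\nu$ is non-constant, which is the whole point of the linearization at $c_0$.

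The fix is to reverse the ordering, which is what the paper does. Projecting the momentum equation with $P_\sigma$ shows that the equation for $\we$ sees $(c',g)$ only through $B_1g=-P_\sigma\Div(2\nabla\tilde\nu(c_0)\otimes\nabla G(g))$, which genuinely is lower order: $\|B_1g\|_{L^2}\le C\|g\|_{H^s}$ for some $s<1$ (Lemma~\ref{lem:LowerOrder}), because two of the three derivatives fall on $G(g)=\Delta_N^{-1}g$. So the operator matrix in the variables $(c',g,\we)$ is upper triangular at leading order, with the damped-plate block $A_1$ coupled to $\we$ via the full-order entry $A_2$ and the Stokes block decoupled; the genuinely lower-order couplings ($B_1$, $B_2$) are collected into a relatively bounded perturbation $\B$ with $\|\B u\|_H\le\eps\|\A u\|_H+C_\eps\|u\|_H$ and absorbed by a standard semigroup perturbation theorem. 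The resolvent of the triangular $\A$ is then computed explicitly, using that $A_2(\lambda+A_\gamma)^{-1}$ is uniformly bounded on a sector, and one single application of maximal regularity to $\A+\B$ finishes the proof. Two smaller points: applying $\Div$ rather than $(I-P_\sigma)$ followed by testing with gradients loses the weak Neumann condition that realizes the outer Laplacian as $\Delta_N$ on $H^{-1}_{(0)}$ (a fourth-order operator needs two boundary conditions, and the second one lives in the choice of base space); and the identification $\mathcal{D}(A^{1/2})=H^1_{(0)}(\Omega)$ together with $\rho_1A^{1/2}\le B\le\rho_2A^{1/2}$, which you assert, does require the quantitative comparison $c_0(-\Delta_N)^2\le A\le C_0(-\Delta_N)^2$ and Krein's interpolation result for fractional powers -- it is the actual content of the Chen--Triggiani step rather than a footnote to it.
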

The proof of this theorem is postponed to Section~\ref{sec:LinearPart}. The second
ingredient for the application of the contraction mapping to
(\ref{eq:AbstractEq}) is the fact that $\mathcal{F}\colon X_T\to Y_T$ is locally
Lipschitz continuous with arbitrarily small Lipschitz constant if $T>0$ is
sufficiently small: 
\begin{prop}\label{prop:LipschitzEstim}
  Let $R>0$ and let Assumption~\ref{assump:3} be satisfied. Then there is a constant $C(T,R)>0$ such that
  \begin{equation*}
    \|\mathcal{F}(\ve_1,c_1)- \mathcal{F}(\ve_2,c_2)\|_{Y_T} \leq C(T,R) \|(\ve_1-\ve_2,c_1-c_2)\|_{X_T} 
  \end{equation*}
  for all $(\ve_j,c_j)\in X_T$ with $\|(\ve_j,c_j)\|_{X_T}\leq R$ and
  $c_j|_{t=0}=c_0$, where $j=1,2$.
  Moreover, $C(T,R)\to 0$ as $T\to 0$.
\end{prop}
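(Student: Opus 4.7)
The plan is to estimate each of the four nontrivial components of $\mathcal{F}(\ve_1,c_1)-\mathcal{F}(\ve_2,c_2)$ separately by means of the bilinear telescoping identity
\[
  N(u_1,v_1)-N(u_2,v_2) = N(u_1-u_2,v_1)+N(u_2,v_1-v_2)
\]
(and its multilinear analogue), combined with standard product, trace, and elliptic-regularity estimates. The constant-in-$(\ve,c)$ initial-data block $(\ve_0,0)$ of $\mathcal{F}$ cancels in the difference, so only the three genuinely nonlinear slots of $\mathcal{F}$ need to be treated.

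The basic toolbox, all uniformly in $T\le T_0$, consists of the embeddings
\[
 X_T^1\hookrightarrow BUC([0,T];H^1(\Omega)^d)\cap C^{1/2}([0,T];L^2(\Omega)^d),\qquad X_T^2\hookrightarrow BUC([0,T];H^2(\Omega))\cap C^{1/2}([0,T];H^1(\Omega)),
\]
which follow from \eqref{eq:BUCEmbedding} and the one-dimensional Sobolev embedding $H^1(0,T)\hookrightarrow C^{1/2}([0,T])$; the Sobolev embedding $H^2\hookrightarrow C^0(\overline\Omega)$ valid for $d\le 3$; the Banach-algebra bound $\|fg\|_{H^1}\le C\|f\|_{H^2}\|g\|_{H^1}$ coming from \eqref{eq:BesovProd} with $p=2$; elliptic regularity $\|G(g)\|_{H^{k+2}}\le C\|g\|_{H^k}$, $k=0,1$, from \eqref{eq:W2qEstim}, together with $\nabla G(\Div\ve)=(I-P_\sigma)\ve$; and the Lipschitz dependence in $BUC([0,T];H^2)$ of the Nemitskii operators $\hrho(c),\tilde\nu(c),\tilde\eta(c),\tilde\gamma(c),\phi(c)$, granted by the $C^3$-regularity assumed in Assumption~\ref{assump:3}.

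Given these tools, the smallness factor $C(T,R)\to 0$ as $T\to 0$ is harvested from two sources. First, because $(c_1-c_2)|_{t=0}=0$ by assumption, the Hölder embedding above gives $\|c_1-c_2\|_{L^\infty(0,T;H^1)}\le CT^{1/2}\|c_1-c_2\|_{X_T^2}$, and interpolation between $BUC([0,T];H^2)$ and $C^{1/2}([0,T];H^1)$ furnishes
\[
  \|c_1-c_2\|_{L^\infty(0,T;H^{2-\eta})}\le C_\eta T^{\eta/2}\|c_1-c_2\|_{X_T^2}\quad\text{for every }\eta\in(0,1).
\]
Second, whenever a $\ve_1-\ve_2$ difference must be estimated and the target is an $L^2_t$-norm, the trivial bound $\|f\|_{L^2(0,T;X)}\le T^{1/2}\|f\|_{L^\infty(0,T;X)}$ applied to the $BUC([0,T];H^1)$-embedding produces the required power. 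Writing out each term of $F_1$, of the second slot $-\rho\ve\cdot\nabla c-\rho\partial_t\tilde c_0+\partial_t\rho(c-\tilde c_0)$, and of $\ve\cdot\nabla\ve$ and $\nabla\hrho^{-1}\cdot\tn{S}(c,\tn{D}\ve)$ via the telescoping identity above yields, after bounding one factor in an $L^\infty_t H^s$-norm of size $O(R)$ and the difference in the spaces just described, the Lipschitz estimate for the $L^2(Q_T)^d$ and $L^2(0,T;H^1_{0}(\Omega))$ components of $Y_T$ with the desired $T^s$-decay.

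The main obstacle is the boundary slot in $H^{1/4,1/2}(S_T)$. Here one observes that $G(\Div\ve)\in L^2(0,T;H^3(\Omega))\cap H^1(0,T;H^1(\Omega))$ by \eqref{eq:W2qEstim}, hence by complex interpolation (cf. \eqref{eq:BesselInterpol}) $\nabla^2 G(\Div\ve)\in L^2(0,T;H^1(\Omega))\cap H^{1/2}(0,T;L^2(\Omega))$, whose trace is controlled in $L^2(0,T;H^{1/2}(\partial\Omega))\cap H^{1/4}(0,T;L^2(\partial\Omega))=H^{1/4,1/2}(S_T)$. Multiplication by $\tilde\nu(c),\tilde\eta(c),\tilde\gamma(c)$ preserves this anisotropic space because these coefficients lie in $BUC(\overline{Q_T})$ with Lipschitz dependence on $c$, and the smallness in $T$ on the $H^{1/4}$-in-time component is recovered from \eqref{eq:VecHEstim}, which trades $H^{1/4}_t$ for a slightly higher Hölder norm at the cost of a factor $T^{1/2}$. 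The delicate step is verifying that the Lipschitz dependence on $c$ of the coefficients in this Besov-type norm on $S_T$ still produces a factor $T^s$; once this is verified, combining all contributions gives $\|\mathcal{F}(\ve_1,c_1)-\mathcal{F}(\ve_2,c_2)\|_{Y_T}\le C(T,R)\|(\ve_1-\ve_2,c_1-c_2)\|_{X_T}$ with $C(T,R)\to 0$ as $T\to 0$.
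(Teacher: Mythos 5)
Your overall strategy -- telescoping the multilinear terms, estimating each slot of $\mathcal{F}$ separately, and harvesting a positive power of $T$ -- is the same as the paper's, but two steps in your smallness mechanism do not go through as stated. First, for the interior quadratic terms your second ``source of smallness'' ($\|f\|_{L^2(0,T;X)}\le T^{1/2}\|f\|_{L^\infty(0,T;X)}$ applied to the $BUC([0,T];H^1)$-embedding) fails because the relevant products are \emph{not} bounded in $L^\infty$ of the target spatial norm: e.g.\ $\ve\cdot\nabla\ve$ with $\ve\in BUC([0,T];H^1)$ lies only in $L^\infty(0,T;L^{3/2})$ for $d=3$, not $L^\infty(0,T;L^2)$, and $\rho\,\ve\cdot\nabla c$ is not in $L^\infty(0,T;H^1)$ since the product of two $H^1$-functions is not in $H^1$; moreover $\ve_1-\ve_2$ need not vanish at $t=0$, so no H\"older-in-time gain is available for velocity differences. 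The paper's device here is space--time interpolation into $L^4$ in time, e.g.\ $\|\nabla\ve\|_{L^4(0,T;L^3)}\le C\|\nabla\ve\|_{L^\infty(0,T;L^2)}^{1/2}\|\nabla\ve\|_{L^2(0,T;L^6)}^{1/2}$ and $\ve\in L^4(0,T;B^1_{3,1})$ via $(H^1,H^2)_{1/2,1}=B^{3/2}_{2,1}\hookrightarrow B^1_{3,1}$ together with the Hanouzet product estimate \eqref{eq:BesovProd}; the factor $T^{1/4}$ then comes from $\|f\|_{L^2(0,T;X)}\le T^{1/4}\|f\|_{L^4(0,T;X)}$. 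Your toolbox, which only records $\|fg\|_{H^1}\le C\|f\|_{H^2}\|g\|_{H^1}$ and $L^\infty$-in-time bounds, cannot produce these estimates.

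Second, the boundary slot remains genuinely open in your argument. Interpolating $G(\Div\ve)\in L^2(0,T;H^3)\cap H^1(0,T;H^1)$ does give the trace of $\nabla^2G(\Div\ve)$ in $H^{\frac14,\frac12}(S_T)$, but with norm $\sim\|\ve\|_{X_T^1}$ and \emph{no} factor $T^s$; and \eqref{eq:VecHEstim} cannot be applied, since $H^{1/2}$-regularity in time does not embed into any H\"older class $C^{s'}$ with $s'>\frac14$. The missing idea is Lemma~\ref{lem:TraceOp} combined with $\partial_nG(\Div\ve)|_{\partial\Omega}=0$: one has $(\vc{n}\cdot\nabla^2G(\Div\ve))_\tau=A\gamma_0\nabla G(\Div\ve)$ for a first-order \emph{tangential} operator $A$, so this trace is of lower order and lies in $BUC([0,T];H^{1/2}(\partial\Omega))\cap C^{1/3}([0,T];L^2(\partial\Omega))$ with norm $\le C\|\ve\|_{X_T^1}$, from which \eqref{eq:BUCEmbedding} and \eqref{eq:VecHEstim} do yield the factor $T^{1/2}$. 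Without this structural reduction the contraction constant for the boundary component cannot be made small, so this is a gap you must close rather than a detail to ``verify''.
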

\begin{proof}
  Let $F_2(\ve,c)= -\rho \ve\cdot \nabla c$ and let $F_3(\ve,c)=  - \partial_t \rho (c-{c}_0)$.
  For $F'(\ve,c)=(F_1(\ve,c),F_2(\ve,c))$ we will show that
  \begin{equation}\label{eq:F'Estim}
    \|F'(\ve_1,c_1)- F'(\ve_2,c_2)\|_{L^p(0,T;L^2(\Omega)^d\times H^1(\Omega))} \leq C(p,R,T_0)  \|(\ve_1-\ve_2,c_1-c_2)\|_{X_T} 
  \end{equation}
  for all $0<T\leq T_0$ and for some $p>2$. (Note that the third component of $\mathcal{F}$
  is constant.) Then the statement of the proposition for these terms follows from the estimate
  \begin{equation*}
    \|f\|_{L^2(0,T;X)} \leq T^{\frac12-\frac1p} \|f\|_{L^p(0,T;X)},
  \end{equation*}
  where $X$ is an arbitrary Banach space.
  In order to estimate the terms involving $c$, we use that
  \begin{equation}
    \label{eq:LinftyEmbed}
\|c\|_{L^\infty(0,T;H^2(\Omega))}\leq C\|c\|_{X_T^2}
  \end{equation}
due to
  (\ref{eq:BUCEmbedding}) and (\ref{eq:BesselInterpol}), where $C$ is
  independent of $T>0$. 
 Since $H^2(\Omega)$ is an algebra with respect to pointwise multiplication, we have $\widetilde{F}(c)\in L^\infty(0,T;H^2(\Omega)))$ for all $\widetilde{F}\in C^3(\R)$, $c\in X_T^2$, as well as
  \begin{equation}\label{eq:FcEstim}
    \|\widetilde{F}(c_1)-\widetilde{F}(c_2)\|_{L^\infty(0,T;H^2(\Omega))} \leq C(R,\widetilde{F}) \|c_1-c_2\|_{X_T^2}
  \end{equation}
  for all $c_j\in X_T^2$ with $\|c_j\|_{X_T^2}$ $j=1,2$. Hence
  \begin{equation*}
      \|\nabla (\hrho(c_1)^{-2}\phi(c_1)-\hrho(c_2)^{-2}\phi(c_2))\|_{L^\infty(0,T;H^1(\Omega))} \leq C(R,\phi,\hrho) \|c_1-c_2\|_{X_T^2}
  \end{equation*}
  for all $c_j\in X_T^2$ with $\|c_j\|_{X_T^2}\leq R$, $j=1,2$.
  Moreover, 
  \begin{equation*}
    \|G(\Div \ve)\|_{L^\infty(0,T;H^2(\Omega))}\leq C \|\ve\|_{L^\infty(0,T;H^1(\Omega))}\leq C'\|\ve\|_{X^1_T}
  \end{equation*}
 for constants $C,C'$ independent of $T>0$ due to (\ref{eq:W2qEstim}) and (\ref{eq:BUCEmbedding}). 
 Since the product of Lipschitz continuous functions is again Lipschitz continuous, it is sufficient to verify that all the products appearing in $F(v,c)$ are well-defined, which is done as follows:
  \begin{eqnarray*}
    \|G(\Div \ve)\nabla c\|_{L^\infty(0,T;L^2(\Omega))} &\leq& C\|G(\Div \ve)\|_{L^\infty(0,T;H^2(\Omega))}\|c\|_{L^\infty(0,T;H^2(\Omega))}\leq C(R)\\
    \|\nabla (\rho^{-1} G(\Div \ve))\|_{L^\infty(0,T;L^2(\Omega))} &\leq &C\|G(\Div \ve)\|_{L^\infty(0,T;H^2(\Omega))}\|\rho^{-1}\|_{L^\infty(0,T;H^2(\Omega))} \leq C(R)\\
    \|\ve\cdot \nabla \ve\|_{L^4(0,T;L^2(\Omega))} &\leq&
    \|\ve\|_{L^\infty(0,T;L^6(\Omega))}\|\nabla \ve\|_{L^4(0,T;L^3(\Omega))}\\
&\leq& C(R)\|\ve\|_{L^\infty(0,T;H^1(\Omega))}^\frac12\|\nabla \ve\|_{L^2(0,T;L^6(\Omega))}^\frac12  \leq C'(R)\\
    \|\nabla \rho^{-1} \cdot \tn{S}(c,\tn{D}\ve)\|_{L^4(0,T;L^2(\Omega))} &\leq& C\|\nabla \rho^{-1}\|_{L^\infty(0,T;L^6)}\|(\nu(c),\eta(c))\|_{L^\infty(Q_T)} \|\nabla \ve\|_{L^4(0,T;L^3)}\\
&\leq& C(R)\|\nabla \ve\|_{L^\infty(0,T;L^2)}^{\frac12}\|\nabla v\|_{L^2(0,T;L^6)}^\frac12\leq 
C'(R) \\
\|\rho \ve\cdot \nabla c\|_{L^4(0,T;H^1(\Omega))} &\leq& C\|\ve\|_{L^4(0,T;B^{1}_{3,1}(\Omega))} \|\nabla r(c)\|_{L^\infty(0,T;H^1)} \\
&\leq& C\|\ve\|_{L^2(0,T;H^2)}^{\frac12}\|\ve\|_{L^\infty(0,T;H^1)}^{\frac12} \|\nabla r(c)\|_{L^\infty(0,T;H^1)}\\
 &\leq& C(R), 
  \end{eqnarray*}
  for all $(\ve,c)\in X_T$ with $\|(\ve,c)\|_{X_T}\leq R$, where $r'(c)=\hrho(c)$ and  we have used \eqref{eq:BesovProd} and 
  \begin{equation*}
    (H^1(\Omega),H^2(\Omega))_{\frac12,1} = B^{\frac32}_{2,1}(\Omega) \hookrightarrow B^1_{3,1}(\Omega). 
  \end{equation*} 
In order to estimate $\nabla ([\rho^{-2},\Div] \nabla c)$, we use that $\nabla [\rho^{-2},\Div]\nabla c =-\nabla( \nabla \rho^{-2}\cdot \nabla c)= \nabla^2 \rho^{-2}\cdot \nabla c + \nabla\rho^{-2} \cdot \nabla^2 c $ and
  \begin{eqnarray*}
    \|\nabla^2 \rho^{-2} \cdot \nabla c\|_{L^4(0,T;L^2(\Omega))} &\leq& C \|\rho\|_{L^\infty(0,T;H^2(\Omega))}\|\nabla c\|_{L^4(0,T;L^\infty(\Omega))}\\
    &\leq & C(R) \|\nabla c\|_{L^2(0,T;H^2(\Omega))}\|\nabla c\|_{L^\infty(0,T;H^1(\Omega))} \leq C'(R)
  \end{eqnarray*}
  due to $\|f\|_{L^\infty(\Omega)}\leq C\|f\|_{B^{\frac32}_{2,1}(\Omega)}\leq C'\|f\|_{H^1}^\frac12\|f\|_{H^2}^\frac12$. The same estimate holds true for $\nabla\rho^{-2} \cdot \nabla^2 c$.
  Hence (\ref{eq:F'Estim}) holds with $p=4$.

  Furthermore, we estimate $F_3(\ve,c)$ as follows
  \begin{eqnarray*}
    \lefteqn{\|(\hrho(c_1) - \hrho(c_2)) \partial_t \tilde{c}_0\|_{L^2(0,T;H^1(\Omega))}}\\
    &\leq & C\|\hrho(c_1) - \hrho(c_2)\|_{L^\infty(0,T;B^{3/2}_{2,1}(\Omega))} \|\partial_t \tilde{c}_0\|_{L^2(0,T;H^1(\Omega))}
    \leq  C(R,\tilde{c}_0)T^\frac14 \|c_1-c_2\|_{X_T^2}\\
   \lefteqn{ \|(\partial_t \hrho(c_1) - \partial_t\hrho(c_2)) (c_1-\tilde{c}_0)\|_{L^2(0,T;H^1(\Omega))}}\\
    &\leq & C\|\partial_t\hrho(c_1) - \partial_t\hrho(c_2) \|_{L^2(0,T;H^1)}\|c_1-\tilde{c}_0\|_{L^\infty(0,T;B^{3/2}_{2,1})}
    \leq  C(R,\tilde{c}_0)T^\frac14 \|c_1-c_2\|_{X_T^2}\\
  \lefteqn{  \|\partial_t \hrho(c_2) (c_1-c_2)\|_{L^2(0,T;H^1(\Omega))}}\\
    &\leq & C\|\partial_t \rho(c_2)\|_{L^2(0,T;H^1(\Omega))}\|c_1 - c_2\|_{L^\infty(0,T;B^{3/2}_{2,1}(\Omega))}
    \leq  C(R,\tilde{c}_0)T^\frac14 \|c_1-c_2\|_{X_T^2}
  \end{eqnarray*}
  since 
  \begin{equation}
    \label{eq:Embedd}
    X_T^2\hookrightarrow C^\frac12 ([0,T];H^1(\Omega))\cap L^\infty(0,T;H^2(\Omega))\hookrightarrow C^\frac18 ([0,T]; H^\frac74(\Omega)),    
  \end{equation}
 $H^\frac74(\Omega)\hookrightarrow B^{\frac32}_{2,1}(\Omega)$, and $(c_1-c_2)|_{t=0}=(c_1-\tilde{c}_0)|_{t=0}=0$. 

Finally, it remains to estimate the third component of $\mathcal{F}(\ve,c)$. To this end we use that 
\begin{eqnarray*}
  \|(\vc{n}\cdot \nabla^2 G(\Div \ve)_\tau\|_{L^2(0,T;H^{\frac32}(\partial\Omega))}&\leq& C\|\ve\|_{L^2(0,T;H^2(\Omega))}\\
  \|(\vc{n}\cdot \nabla^2 G(\Div \ve)_\tau\|_{H^1(0,T;H^{-\frac12}(\partial\Omega))}&\leq& C\|\ve\|_{H^1(0,T;L^2(\Omega))}
\end{eqnarray*}
for all $\ve\in X_T^1$ since $(\vc{n}\cdot \nabla^2 G(\Div \ve))_\tau= A\gamma_0\ve$ for some first order tangential differential operator $A$, cf. Lemma~\ref{lem:TraceOp}. Hence
\begin{eqnarray*}
  \lefteqn{\|(\vc{n}\cdot \nabla^2 G(\Div \ve))_\tau\|_{L^2(0,T;H^{\frac12}(\partial\Omega))}}\\
&\leq& T^\frac12\|(\vc{n}\cdot \nabla^2 G(\Div \ve)\|_{BUC([0,T];H^{\frac12}(\partial\Omega))}\leq CT^{\frac12}\|\ve\|_{X_T^1}
\end{eqnarray*}
for all $\ve\in X_T^1$ due to \eqref{eq:BUCEmbedding} and
\begin{eqnarray*}
  \|(\vc{n}\cdot \nabla^2 G(\Div \ve)_\tau\|_{H^{\frac14}(0,T;L^2(\partial\Omega))}\leq T^\frac12\|(\vc{n}\cdot \nabla^2 G(\Div \ve)\|_{C^\frac13([0,T];L^2(\partial\Omega))}\leq CT^{\frac12}\|\ve\|_{X_T^1}
\end{eqnarray*}
due to \eqref{eq:VecHEstim} and since $BUC([0,T];H^1)\cap C^{\frac12}([0,T];H^{-\frac12})\hookrightarrow C^{\frac13}([0,T];L^2)$ because of $\|f\|_{L^2}\leq C\|f\|_{H^1}^\frac13\|f\|_{H^{-\frac12}}^{\frac23}$. Together we have
\begin{equation*}
  \|(\vc{n}\cdot \nabla^2 G(\Div \ve)_\tau\|_{H^{\frac14,\frac12}(S_T)}\leq C T^\frac12 \|\ve\|_{X_T^1}
\end{equation*}
By the same arguments one shows that 
\begin{equation*}
  \|( \nabla G(\Div \ve))_\tau\|_{H^{\frac14,\frac12}(S_T)}\leq C T^\frac12 \|\ve\|_{X_T^1}. 
\end{equation*}
Moreover, using \eqref{eq:Embedd},
 $\|fg\|_{H^{\frac12}(\partial\Omega)}\leq C\|f\|_{B^1_{2,1}(\partial\Omega)}\|g\|_{H^{\frac12}(\partial\Omega)}$ as well as $\|fg\|_{L^2(\partial\Omega)}\leq \|f\|_{L^4(\partial\Omega)}\|g\|_{L^4(\partial\Omega)}\leq C\|f\|_{H^{\frac12}(\partial\Omega)}\|g\|_{H^{\frac12}(\partial\Omega)}$, one obtains
 \begin{eqnarray*}
 \lefteqn{\| (\tilde{\nu}(c_1)-\tilde{\nu}(c_2))a\|_{H^{\frac14}(0,T;L^2(\partial\Omega))}}\\
 &\leq& CT^{\frac14}\|(\tilde{\nu}(c_1)-\tilde{\nu}(c_2))\|_{C^{\frac12}([0,T];H^{\frac12}(\partial\Omega))}\|a\|_{L^2(0,T;H^{\frac12}(\partial\Omega))}\\
&& + CT^{\frac14}\|(\tilde{\nu}(c_1)-\tilde{\nu}(c_2))\|_{C^{\frac14}([0,T];B^{1}_{2,1}(\partial\Omega))}\|a\|_{H^\frac14(0,T;L^2(\partial\Omega))}
 \end{eqnarray*}
and therefore
\begin{eqnarray}\nonumber
 \lefteqn{\| (\tilde{\nu}(c_1)-\tilde{\nu}(c_2))a\|_{H^{\frac14,\frac12}(S_T)}}\\\label{eq:TraceEstim}
&\leq& C(R,\tilde{\nu})T^{\frac14}\|c_1-c_2\|_{BUC([0,T];H^2(\Omega))\cap C^{\frac12}([0,T];H^1(\Omega))}\|a\|_{H^{\frac14,\frac12}(S_T)}
\end{eqnarray}
for all $a\in H^{\frac14,\frac12}(S_T)$ and $c_j\in X_T^2$ with $\|c_j\|_{X_T^2}\leq R$ and $c_j|_{t=0}=c_0$, $j=1,2$. Combining these estimates, we obtain
\begin{eqnarray*}
  \lefteqn{\|(\tilde{\nu}(c_1)\vc{n}\cdot \nabla^2 G(\Div \ve_1))_\tau- (\tilde{\nu}(c_2)\vc{n}\cdot \nabla^2 G(\Div \ve_2))_\tau\|_{H^{\frac14,\frac12}(S_T)}}\\
&\leq & \|((\tilde{\nu}(c_1)-\tilde{\nu}(c_2))\vc{n}\cdot \nabla^2 G(\Div \ve_1))_\tau\|_{H^{\frac14,\frac12}(S_T)}\\
&&+\|(\tilde{\nu}(c_2)\vc{n}\cdot \nabla^2 G(\Div (\ve_1-\ve_2))_\tau\|_{H^{\frac14,\frac12}(S_T)}\\
&\leq & C(R)T^{\frac14}\left(\|\ve_1-\ve_2\|_{X_T^1}+\|c_1-c_2\|_{X_T^2}\right)
\end{eqnarray*}
for all $u_j=(\ve_j,c_j)\in X_T$ with $\|u_j\|_{X_T}\leq R$ and $c_j|_{t=0}=c_0$, $j=1,2$  since $\|(\nabla \ve,\ve)\|_{H^{\frac14,\frac12}(S_T)}\leq C\|\ve\|_{X^1_T}$. In the same way one can estimate $\tilde{\gamma}(c_1)(\nabla G(\Div\ve_1)_\tau- \tilde{\gamma}(c_2)(\nabla G(\Div\ve_2)_\tau|_{\partial\Omega}$, which proves the necessary estimate of the third component of $\mathcal{F}$.

Altogether this proves the proposition.
\end{proof}

Combining Theorem~\ref{thm:LinearPart} and
Proposition~\ref{prop:LipschitzEstim}, we are now able to prove our main result.\\
\begin{proof*}{of Theorem~\ref{thm:main}}
  First of all, let $\tilde{c}_0\in X^2_1$ be such that $\|\tilde{c}_0\|_{X^2_1}\leq
  C'\|c_0\|_{H^2(\Omega)}$ for some fixed constant $C'>0$ and let
 $$
R=\max\left(C'\|c_0\|_{H^2(\Omega)},\|S(\tilde{c}_0) L^{-1}(\tilde{c}_0)\mathcal{F}(0)\|_{X_T}\right),
$$
  where $S$ is defined below.  
  We will construct a solution in $B_R(0)\subset X_T$ for sufficiently small
  $0<T\leq 1$ with the aid of the contraction mapping principle. If
  \begin{equation*}
  c_1,c_2\in B_R(0)\subset BUC([0,T];H^2(\Omega))\cap C^\frac12(0,T;H^1(\Omega))=:\widetilde{X}_T^2,
  \end{equation*}
 $0<T\leq 1$ with $c_1|_{t=0}= c_2|_{t=0}$, we conclude
  \begin{eqnarray*}
    \lefteqn{\left\|L(c_1)u-L(c_2)u\right\|_{Y_T}}\\
    &\leq&
    C\left(\|\widetilde{\tn{S}}(c_1,\tn{D}\ve)-\widetilde{\tn{S}}(c_2,\tn{D}\ve)\|_{L^2(0,T;H^1)}+ \left\|\left.\widetilde{\tn{S}}(c_1,\tn{D}\ve)-\widetilde{\tn{S}}(c_2,\tn{D}\ve)\right|_{\partial\Omega}\right\|_{H^{\frac14,\frac12}(S_T)}\right.\\
&&\left.+ \left\|\left.(\tilde{\gamma}(c_1)-\tilde{\gamma}(c_2))\ve\right|_{\partial\Omega}\right\|_{H^{\frac14,\frac12}(S_T)}+\|(\hrho(c_1)^{-2}-\hrho(c_2)^{-2})\nabla c'\|_{L^2(0,T;H^2)}\right)\\
&\leq & C\left(\|c_1-c_2\|_{L^\infty(0,T;B^{1}_{3,1})}\|\ve\|_{X_T^1}
+
    \|\hrho(c_1)^{-2}-\hrho(c_2)^{-2}\|_{L^\infty(0,T;H^2)}\|\nabla c'\|_{L^2(0,T;B^1_{3,1})}\right.\\
&&\left. +\|\hrho(c_1)^{-2}-\hrho(c_2)^{-2}\|_{L^\infty(0,T;B^1_{3,1})}\|c'\|_{L^2(0,T;H^3)}\right)\\
&\leq &C(R)\left(T^\frac14\|c_1-c_2\|_{\widetilde{X}_T^2} \|u\|_{X_T}+T^\frac14\|c_1-c_2\|_{BUC([0,T];H^2)}\|c'\|_{L^4([0,T];B^{\frac52}_{2,1})}\right)\\
    &\leq& C(R) T^\frac14\left(\|c_1-c_2\|_{C^\frac12([0,T];H^1)}+\|c_1-c_2\|_{BUC([0,T];H^2)}\right)\|u\|_{X_T}.
  \end{eqnarray*} 
  where $u=(\ve,c')\in X_T$ with $c'|_{t=0}=0$. Here we have {}
  used that 
$$
\|fg\|_{H^2(\Omega)}\leq C\left(\|f\|_{B^1_{3,1}(\Omega)}\|g\|_{H^2(\Omega)}+\|f\|_{H^2(\Omega)}\|g\|_{B^1_{3,1}(\Omega)}\right)
$$ 
and have used \eqref{eq:TraceEstim}. Hence there is some $0<T_0\leq 1$ such that
  \begin{eqnarray*}
    \left\|L(c)u-L(c_0)u\right\|_{Y_T} &\leq& \frac1{4C(1)}\|u\|_{X_T}\quad \text{for
    all}\ 0<T\leq T_0, \|c\|_{X_T^2}\leq R \\
  \left\|L(c_1)u-L(c_2)u\right\|_{Y_T} &\leq& \frac1{4C(1)}\|u\|_{X_T}\quad
    \text{for all}\ 0<T\leq T_0, \|c_j\|_{X_T^2}\leq R, j=1,2, 
  \end{eqnarray*}
  since $c_0\in \widetilde{X}_T^2$. This implies that $L(c)\colon Y_T \to X_T$ is invertible and 
  $\|L(c)^{-1}\|_{\mathcal{L}(Y_T,X_T)}\leq \frac43 C(1)\leq 2 C(1)$ as well as
  \begin{eqnarray*}
    \lefteqn{\|L(c_1)^{-1}-L(c_2)^{-1}\|_{\mathcal{L}(Y_T,X_T)}}\\
&\leq&
    4C(1)^2\|L(c_1)-L(c_2)\|_{\mathcal{L}(X_T,Y_T)}
    \leq C(R)T^\frac18\|c_1-c_2\|_{X_T^2}.
  \end{eqnarray*}
  Moreover, we can choose $T_0$ so small that $\|c\|_{X_{T_0}^2}\leq R$ and $c|_{t=0}=c_0$ implies $\|c-c_0\|_{C^0(\ol{Q_T})}\leq \eps_0$ since $X_{T_0}^2\hookrightarrow C^{\frac18}([0,T_0];C^0({\ol{\Omega}}))$, where $\eps_0$ is as in Assumption~\ref{assump:3}.
  Then $|c(x,t)|\leq 1+\eps_0$ for all $0\leq t\leq T, x\in\ol{\Omega}$.

  Hence we can write (\ref{eq:LT1'})-(\ref{eq:LT4'}) as a fixed point equation
  \begin{equation*}
    u = S(c)L^{-1}(c) \mathcal{F}(u)=:\mathcal{G}(u), 
  \end{equation*}
  where $S(c)\colon X_T\to X_T$ is defined by 
  \begin{equation*}
S(c)
  \begin{pmatrix}
    \ve'\\ c'
  \end{pmatrix}
= 
\begin{pmatrix}
\ve'\\ \hrho(c)^{-1}c'+\tilde{c}_0
\end{pmatrix}
\equiv S'(c)+
\begin{pmatrix}
  0\\ \tilde{c}_0
\end{pmatrix}.
  \end{equation*}
 In order to estimate $S(c)$, we use 
\begin{eqnarray*}
  \|(\hrho(c_1)^{-1}-\hrho(c_2)^{-1})c'\|_{L^2(0,T;H^3(\Omega))}&\leq& C(R)T^{\frac14} \|c_1-c_2\|_{X^2_T}\|c'\|_{X^2_T}
\end{eqnarray*}
 if $\|c_j\|_{X^2_T}\leq R$, which can be shown using that $X^2_T\hookrightarrow C^{\frac14}([0,T];B^{3/2}_{2,1}(\Omega))$ and $c_1-c_2|_{t=0}=c'|_{t=0}=0$.
 Moreover, we have
\begin{eqnarray*}
  \lefteqn{\|\partial_t((\hrho(c_1)^{-1}-\hrho(c_2)^{-1})c')\|_{L^2(Q_T)}}\\
&\leq& 
  \|(\hrho(c_1)^{-1}-\hrho(c_2)^{-1})\partial_t c'\|_{L^2(Q_T)}+
  \|(\partial_t (\hrho(c_1)^{-1}-\hrho(c_2)^{-1})c'\|_{L^2(Q_T)}\\
  &\leq& C(R)\left(\|c_1-c_2\|_{BUC([0,T];C(\ol{\Omega}))}\|c'\|_{X_T^2}
+ \|\partial_t(c_1-c_2)\|_{L^2(Q_T)}\|c'\|_{BUC([0,T];C(\ol{\Omega}))}\right)\\
&\leq& C(R)T^{\frac14} \|c_1-c_2\|_{X_T^2}\|c'\|_{X_T^2},
\end{eqnarray*}
provided that $\|c_j\|_{X_T^2}\leq R$ and $c_1-c_2|_{t=0}=c'|_{t=0}=0$.
Here we have used that
\begin{equation*} 
  \|d\|_{BUC([0,T];C(\ol{\Omega}))}\leq CT^{\frac14}\|d\|_{C^{\frac14}([0,T];B^{3/2}_{2,1}(\Omega))}\leq C'T^{\frac14}\|d\|_{X_T^2} 
\end{equation*}
for all $d\in X_T^2$ with $d|_{t=0}=0$.
Altogether this implies
\begin{equation*}
  \|S(c_1)-S(c_2)\|_{\mathcal{L}(X_T^0,X_T)}\leq C(R) T^{\frac14}\|c_1-c_2\|_{X_T^2}
\end{equation*}
provided that $\|c_j\|_{X_T^2}\leq R$ and $c_1-c_2|_{t=0}=0$, where $X_T^0=\{(\vc,c')\in X_T: c'|_{t=0}=0\}$. 
Therefore we get
  \begin{eqnarray*}
    \lefteqn{\|S(c_1)L^{-1}(c_1)
      -S(c_2)L^{-1}(c_2)\|_{\mathcal{L}(Y_T^0,X_T)}}\\
&\leq&
    \|S'(c_1)\|_{\mathcal{L}(X_T)}\|L^{-1}(c_1)
    -L^{-1}(c_2)\|_{\mathcal{L}(Y_T,X_T)}\\
&&+
    \|S(c_1)-S(c_2)\|_{\mathcal{L}(X_T^0,X_T)}\|L^{-1}(c_2)\|_{\mathcal{L}(Y_T,X_T)}\\
    &\leq& C(R) T^\frac14 \|c_1-c_2\|_{X_T^2}, 
  \end{eqnarray*}
where  $Y_T^0=\{(\vc{f},g,a,\ve_0,c'_0)\in Y_T: c'_0=0\}$. -- Note that $L(c)^{-1}(Y_T^0)=X_T^0$. --
  Because of Proposition~\ref{prop:LipschitzEstim}, we have
  \begin{eqnarray*}
    \lefteqn{\|S(c_1)L^{-1}(c_1) \mathcal{F}(u_1)-S(c_2)L^{-1}(c_2)\mathcal{F}(u_2)\|_{X_T}}\\
 &\leq& \|S(c_1)L^{-1}(c_1)-S(c_2)L^{-1}(c_2)\|_{\mathcal{L}(Y_T^0,X_T)}\|\mathcal{F}(u_1)\|_{Y_T} + C \|\mathcal{F}(u_1)-\mathcal{F}(u_2)\|_{X_T}\\
     &\leq& C(R) T^\frac14\|u_1-u_2\|_{X_T} +
    C(T,R)\|u_1-u_2\|_{X_T} \leq \frac12 \|u_1-u_2\|_{X_T}
  \end{eqnarray*} 
  for all sufficiently small $0<T\leq T_0$
  and all $u_j=(\ve_j,c_j)\in X_T$ with $\|u_j\|_{X_T}\leq R$ and $c_j|_{t=0}=c_0$. 
Moreover,
\begin{eqnarray*}
  \lefteqn{\|S(c) L^{-1}(c)\mathcal{F}(u)\|_{X_T}}\\
 &\leq& \|S(c) L^{-1}(c)\mathcal{F}(u)-S(\tilde{c}_0) L^{-1}(\tilde{c}_0)\mathcal{F}(0)\|_{X_T}+ \|S(\tilde{c}_0) L^{-1}(\tilde{c}_0)\mathcal{F}(0)\|_{X_T} \\
&\leq &\frac12 \|u\|_{X_T}+ \frac{R}2\leq R
\end{eqnarray*}
for all $\|u\|_{X_T}\leq R$. Hence by the contraction mapping principle there is a unique solution $u=(\ve,c)$ of \eqref{eq:LT1'}-\eqref{eq:LT4'} with $\|u\|_{X_T}\leq R$.

Thus we have proved that for every $\ve_0\in H^1_n(\Omega),c_0\in H^2(\Omega)$ there are some $T,R>0$ such that the system (\ref{eq:LT1'})-(\ref{eq:LT4'}) has a unique solution $(\ve,c)\in X_T$ with $\|(\ve,c)\|_{X_T}\leq R$. In order to show that there is only one solution $(\ve,c)\in X_T$ of (\ref{eq:LT1'})-(\ref{eq:LT4'}), let $(\ve',c')\in X_T$ be a second solution of (\ref{eq:LT1'})-(\ref{eq:LT4'}) and let $R'=\max(R,\|(\ve',c')\|_{X_T})$. Then by the arguments above there is some $T'\in (0,T]$ such that (\ref{eq:LT1'})-(\ref{eq:LT4'}) has a unique solution $(\ve'',c'')\in X_{T'}$ (on the time interval $(0,T')$) with $\|(\ve'',c'')\|_{X_{T'}}\leq R'$. Hence $(\ve,c)|_{(0,T')}\equiv (\ve',c')|_{(0,T')}\equiv (\ve'',c'')$. Repeating this argument finitely many times (with a shift in time), we conclude that $(\ve,c)\equiv (\ve',c')$ on the full time interval $(0,T)$. 
\end{proof*}

\section{Linearized System -- Proof of Theorem~\ref{thm:LinearPart}}\label{sec:LinearPart}

In this section we will show unique solvability of the linear system
\begin{alignat}{2}\label{eq:lin1'}
      \partial_t \ve - \Div \widetilde{\tn{S}}(c_0,\tn{D}\ve) + \frac{\eps}{\beta\alpha}\nabla \Div (\rho^{-4} \nabla c')&= \vc{f}_1&\quad& \text{in}\ Q_T,\\\label{eq:lin2'}
     \partial_t c' - \beta^{-1} \Div \ve&= f_2 &\quad& \text{in}\ Q_T,\\\label{eq:lin3'}
  \left.(\vc{n}\cdot \widetilde{\tn{S}}(c_0,\tn{D}(P_\sigma\ve)))_\tau+\tilde{\gamma}(c_0) (P_\sigma\ve)_\tau\right|_{\partial\Omega}
  &= \vc{a} &\quad& \text{on}\ S_T,\\\label{eq:lin3b'}
  \vc{n}\cdot\ve|_{\partial\Omega}=\partial_n c|_{\partial\Omega} &= 0 &\quad& \text{on}\ S_T,\\\label{eq:lin4'}
  (\ve,c')|_{t=0} &=(\ve_0,c_0') && \text{in}\ \Omega,
\end{alignat}
where $(\vc{f}_1,f_2,\vc{a},\ve_0,c_0')\in Y_T$, $(\ve,c')\in X_T$, and $X_T,Y_T$ are as in Section~\ref{sec:Strong}. 

First of all, we can reduce to the case $\vc{a}\equiv  0$ by subtracting from $\ve$ some $\vc{w}\in X^1_T$ such that $\left.(\vc{n}\cdot \widetilde{\tn{S}}(c_0,\tn{D}\we))_\tau+\tilde{\gamma}(c_0) \we_\tau\right|_{\partial\Omega}
  = \vc{a}_\tau$, $\vc{w}|_{t=0}=0$, $\Div \we =0$, $\we|_{\partial\Omega}=0$. The existence of such a $\vc{w}\in X^1_T$, depending continuously on $\vc{a}\in H^{\frac14,\frac12}(S_T)^d$ with $\vc{a}_n=0$, follows directly from Lemma~\ref{lem:Extension}. 
For the following let 
\begin{equation*}
  T_\gamma \ue = \left.(\vc{n}\cdot \widetilde{\tn{S}}(c_0,\tn{D}\ue))_\tau+\tilde{\gamma}(c_0) \ue_\tau\right|_{\partial\Omega}\quad \text{for all}\ \ue \in H^2(\Omega)^d.
\end{equation*}

Now we will reformulate the system above in an appropriate way assuming that $(\ve,c')\in X_T$. Since \eqref{eq:lin2'} depends only on $\Div \ve$, we will use the Helmholtz decomposition to decompose $\ve$. More precisely, using  $L^2(\Omega)^d= L^2_\sigma(\Omega) \oplus G_2(\Omega)$ and  applying 
  $P_\sigma$ and $(I-P_\sigma)$ to \eqref{eq:lin1'}, 
  we obtain that \eqref{eq:lin1'} is equivalent to
  \begin{eqnarray}\label{eq:Split1}
    \partial_t \we - P_\sigma\Div \widetilde{\tn{S}}(c_0,\tn{D}\we) - P_\sigma\Div \widetilde{\tn{S}}(c_0,\nabla^2 G(\Div \ve)) &=& P_\sigma \vc{f}_1,  \\\label{eq:Split2}
    \partial_t \nabla G(\Div \ve) - (I-P_\sigma)\Div \widetilde{\tn{S}}(c_0,\tn{D}\ve) +
    \frac\eps{\alpha\beta} \nabla \Div (\rho^{-4}_0 \nabla c') &=& (I-P_\sigma)\vc{f}_1, 
  \end{eqnarray}
  where $\we=P_\sigma \ve$ and $\ve=\we+\nabla G(\Div\ve)$ and $G$ is defined by \eqref{eq:GDefn1}-\eqref{eq:GDefn2}. In the following let $g=\Div
  \ve$.

In order to determine the principal part of \eqref{eq:Split1}, we use that
  \begin{eqnarray}\nonumber
    \lefteqn{P_\sigma\Div \widetilde{\tn{S}}(c_0,\nabla^2 G(\Div\ve)) = 
    P_\sigma\Div \left(2\tilde{\nu}(c_0)\nabla^2 G(\Div\ve)\right) + P_\sigma \nabla
    (\tilde{\eta}(c_0) \Div \ve)}\\\nonumber
  &=& P_\sigma\nabla \Div \left((2\tilde{\nu}(c_0)) \nabla G(\Div\ve) \right)
  -P_\sigma \Div (2\nabla\tilde{\nu}(c_0)\otimes \nabla G(\Div\ve))\\\label{eq:DefnB1}
  &=& -P_\sigma \Div (2\nabla\tilde{\nu}(c_0)\otimes \nabla G(\Div\ve)) \equiv B_1 g.
\end{eqnarray}
Moreover,
testing (\ref{eq:Split2}) with $\nabla \varphi$, where $\varphi \in C_0^\infty(0,T;H^1_{(0)}(\Omega))$, one sees that (\ref{eq:Split2}) is equivalent to
  \begin{eqnarray*}
    \lefteqn{-\weight{\partial_t g, \varphi}_{L^2(0,T;\Hzero)} - \left(\Div \widetilde{\tn{S}}(c_0,\tn{D}\ve),\nabla \varphi\right)_{Q_T}}\\
&& + \frac{\eps}{\alpha\beta} (\nabla \Div(\rho^{-4}_0 \nabla \tilde{c}), \nabla \varphi)_{Q_T} = ((I-P_\sigma) \vc{f}_1,\nabla \varphi)_{Q_T} 
  \end{eqnarray*}
  for all $\varphi \in C_0^\infty(0,T;H^1_{(0)}(\Omega))$, where we have used again the orthogonal decomposition $L^2(\Omega)^d= L^2_\sigma(\Omega) \oplus G_2(\Omega)$ and the fact that $g=\Div_n \ve\in H^1(0,T;\Hzero(\Omega))$ if $\ve\in X_T^1$. Moreover, we note that
  \begin{eqnarray*}
    \lefteqn{\Div \widetilde{\tn{S}}(c_0,\nabla^2 G(\Div \ve)) = \Div (2\tilde{\nu}(c_0) \nabla^2 G(\Div \ve)) + \nabla (\tilde{\eta}(c_0) g)}\\
 &=&2\tilde{\nu}(c_0) \nabla g + \nabla (\tilde{\eta}(c_0) g) + 2(\nabla \tilde{\nu}(c_0)) \cdot \nabla^2 G(\Div \ve)\\  
 &=&\nabla((2\tilde{\nu}(c_0)+\tilde{\eta}(c_0)) g) + 2(\nabla \tilde{\nu}(c_0)) \cdot \nabla^2 G(\Div \ve)- (2\nabla \tilde{\nu}(c_0)) g.
  \end{eqnarray*}
  Hence
  \begin{equation*}
    \left(\Div \widetilde{\tn{S}}(c_0,\tn{D} \nabla G(g)),\nabla \varphi\right) = -\weight{\Delta_N (a(c_0)g), \varphi}_{\Hzero,\Hone} + \weight{B_2 g, \varphi}_{\Hzero,\Hone},
  \end{equation*}
  where $a(c_0)= 2\tilde{\nu}(c_0)+\tilde{\eta}(c_0)$ and $B_2$ is defined by the equation.

Therefore we can reformulate \eqref{eq:lin2'}-\eqref{eq:Split2} with $\vc{a}\equiv 0$ more abstractly as 
\begin{eqnarray}\label{eq:AbstractEvo}
  \partial_t u + \A u + \mathcal{B}u 
 &=& 
  \begin{pmatrix}
f_2\\
    \Div_n (I-P_\sigma) \vc{f}_1 \\
     P_\sigma \vc{f}_1
  \end{pmatrix}=: f\\\label{eq:AbstractEvo2}
u|_{t=0}&=&
\begin{pmatrix}
  c'_0\\ g_0\\\vc{w}_0
\end{pmatrix}=: u_0
\end{eqnarray}
where $u= (c',g,\we)^T$, $g_0=\Div \ve_0$,  $\ve_0=\we_0+ \nabla G(g_0) $, and
\begin{eqnarray*}
  \mathcal{A} u &=& \left(
  \begin{array}{cc}
    A_1 
    & A_2
 \\ 0&-P_\sigma \Div \widetilde{\tn{S}}(c_0,\tn{D}\we)
  \end{array}
  \right) 
  \begin{pmatrix}
    (c',g)^T\\ \we
  \end{pmatrix},\\
  A_1 
&=& \left(
      \begin{array}{cc}
  0& -\beta^{-1}P_0  \\
  \frac\eps{\alpha\beta} \Delta_N(\Div \rho^{-4}_0\nabla\cdot)  &       -\Delta_N (a(c_0)  \cdot)
      \end{array}
\right) 
,\\
  A_2 \we&=& 
  \begin{pmatrix}
   0\\  -\Div_n (\Div \widetilde{\tn{S}}(c_0, \tn{D}\we))
  \end{pmatrix},\
  \mathcal{B} u = 
  \begin{pmatrix}
  0\\ B_2 g\\ 
-B_1 g
  \end{pmatrix}.
\end{eqnarray*}
and the domains of $\A,\B$ are defined as
\begin{eqnarray*}
  \mathcal{D}(\A) &=& \mathcal{D}(\B)= \left\{ (c',g,\we)^T: (c',g)\in \mathcal{D}(A_1), \we\in H^2(\Omega)^d\cap L^2_\sigma(\Omega):T_\gamma\we =0\right\},\\
  \mathcal{D}(A_1) &=& \left(H^3(\Omega)\cap H^2_N(\Omega)\right)\times H^1_{(0)}(\Omega).
\end{eqnarray*}

We consider $\A$ and $\B$ as unbounded operators on 
\begin{equation*}
  H= H_1 \times L^2_\sigma(\Omega) \quad \text{where}\ H_1=  H^1(\Omega)\times \Hzero(\Omega).
\end{equation*}

For the following analysis it will be crucial that $B_1$ and $B_2$ are of lower order compared to $\A$. More precisely, we have:
\begin{lem}\label{lem:LowerOrder}
  Let $s\in (\frac12,1]$ and $c_0\in H^2(\Omega)$. Then there are constants $C(c_0)$, $C'(c_0,s)>0$ such that
  \begin{alignat}{2}
  \|B_1g\|_{L^2(\Omega)}\label{eq:B1Estim}
&\leq C'(s,c_0) \|g\|_{H^s(\Omega)}, &\qquad&\\
 \|B_2g\|_{H^{-1}_{(0)}(\Omega)}&\leq \label{eq:B2Estim}
  C(c_0) \|g\|_{H^\frac12(\Omega)} 
  \end{alignat}
for all $g\in \Hone(\Omega)$,
where $B_1,B_2$ are as above.
\end{lem}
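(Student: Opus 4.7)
The strategy is to make the structure of $B_1$ and $B_2$ explicit by distributing the divergence and exploiting the defining identity $\Delta G(g) = g$, and then to use elliptic regularity for the Neumann problem together with Sobolev embeddings and Hölder's inequality to obtain the claimed bounds. Throughout, I will use that $\tilde\nu, \tilde\eta \in C^2$ together with $c_0 \in H^2(\Omega)$ implies $\tilde\nu(c_0), a(c_0) \in H^2(\Omega)$ (since $H^2 \hookrightarrow L^\infty$ for $d \leq 3$ and the chain rule applies), with norms depending only on $c_0$. Moreover, standard Neumann elliptic theory yields $\|G(g)\|_{H^{s+2}(\Omega)} \leq C \|g\|_{H^s(\Omega)}$ for $g \in H^s_{(0)}(\Omega)$ when $\partial\Omega \in C^3$, at least for $s \in [0,1]$.

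\textbf{Estimate for $B_1$.} First I compute
\[
\Div\bigl(\nabla\tilde\nu(c_0) \otimes \nabla G(g)\bigr) = \bigl(\nabla^2 \tilde\nu(c_0)\bigr)\nabla G(g) + \nabla\tilde\nu(c_0)\, \Delta G(g) = \bigl(\nabla^2 \tilde\nu(c_0)\bigr)\nabla G(g) + g\, \nabla\tilde\nu(c_0),
\]
so, since $P_\sigma$ is a bounded projection on $L^2(\Omega)^d$,
\[
\|B_1 g\|_{L^2} \leq 2\,\|\nabla^2 \tilde\nu(c_0)\|_{L^2}\,\|\nabla G(g)\|_{L^\infty} + 2\,\|g\|_{L^{p_1}}\,\|\nabla\tilde\nu(c_0)\|_{L^{p_2}}
\]
with $1/p_1 + 1/p_2 = 1/2$. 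For the first summand, $\nabla G(g) \in H^{s+1}(\Omega) \hookrightarrow L^\infty(\Omega)$ requires $s+1 > d/2$, i.e.\ $s > 1/2$ for $d=3$ (and the condition is already satisfied for all admissible $s$ when $d=2$). For the second summand, take $p_2=6$, $p_1=3$ when $d=3$ (resp.\ $p_1=p_2=4$ when $d=2$); then $\nabla\tilde\nu(c_0) \in H^1 \hookrightarrow L^6$ (resp.\ $L^4$), and $H^s \hookrightarrow L^3$ (resp.\ $L^4$) for any $s>1/2$. This yields \eqref{eq:B1Estim}.

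\textbf{Estimate for $B_2$.} From the preceding computation in the text, $B_2 g$ is defined in the dual sense by
\[
\langle B_2 g, \varphi\rangle_{H^{-1}_{(0)}, H^1_{(0)}} = \bigl(2\nabla\tilde\nu(c_0)\cdot\nabla^2 G(g),\nabla\varphi\bigr)_{L^2} - \bigl(2 g\, \nabla\tilde\nu(c_0),\nabla\varphi\bigr)_{L^2}
\]
for $\varphi\in H^1_{(0)}(\Omega)$. Applying Hölder with the triple $(6,3,2)$ in $d=3$ (resp.\ $(4,4,2)$ in $d=2$):
\[
|\langle B_2 g,\varphi\rangle| \leq C\,\|\nabla\tilde\nu(c_0)\|_{L^6}\bigl(\|\nabla^2 G(g)\|_{L^3} + \|g\|_{L^3}\bigr)\|\nabla\varphi\|_{L^2}.
\]
Here $\nabla\tilde\nu(c_0) \in H^1 \hookrightarrow L^6$ is controlled by $c_0$, while $\nabla^2 G(g)\in H^{1/2}(\Omega)\hookrightarrow L^3(\Omega)$ and $H^{1/2}(\Omega)\hookrightarrow L^3(\Omega)$ (both critical embeddings in $d=3$; easier in $d=2$). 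Using elliptic regularity $\|\nabla^2 G(g)\|_{H^{1/2}} \leq C\|g\|_{H^{1/2}}$, this gives \eqref{eq:B2Estim} after taking the supremum over $\|\varphi\|_{H^1_{(0)}}=\|\nabla\varphi\|_{L^2}\leq 1$.

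\textbf{Main technical point.} The only subtlety is the critical nature of the Sobolev embedding at $s=1/2$ in three dimensions. For $B_1$, the term $(\nabla^2\tilde\nu(c_0))\nabla G(g)$ forces $\nabla G(g)\in L^\infty$, which is exactly the reason for the strict condition $s > 1/2$ in \eqref{eq:B1Estim}. For $B_2$ the loss of one derivative allowed by testing against $\nabla\varphi\in L^2$ lets $s=1/2$ suffice, since only the (non-strict) embeddings $H^1\hookrightarrow L^6$ and $H^{1/2}\hookrightarrow L^3$ are used.
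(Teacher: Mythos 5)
Your proof is correct and follows essentially the same route as the paper: the same elliptic regularity $\|G(g)\|_{H^{s+2}}\leq C\|g\|_{H^s}$, the same H\"older/Sobolev embeddings, and for $B_2$ literally the same computation (bounding the defining $L^2$-pairing against $\nabla\varphi$ via $\nabla\tilde{\nu}(c_0)\in L^6$, $\nabla^2G(g), g\in L^3$). The only, harmless, difference is in the $B_1$ estimate, where you expand $\Div(\nabla\tilde{\nu}(c_0)\otimes\nabla G(g))$ explicitly using $\Delta G(g)=g$ and treat the two resulting terms separately, whereas the paper bounds the tensor product in $H^1$ via the product estimate $\|fg\|_{H^1}\leq C_s\|f\|_{H^{1+s}}\|g\|_{H^1}$ (valid for $1+s>d/2$, i.e.\ $s>\tfrac12$) and then applies $\Div\colon H^1\to L^2$.
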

\begin{proof}
By the definition of $B_1$ and \eqref{eq:DefnB1}, we have
  \begin{eqnarray*}
  \|B_1g\|_{L^2(\Omega)}&=&  \|P_\sigma\Div \widetilde{\tn{S}}(c_0,\tn{D}\nabla
 G(g))\|_{L^2(\Omega)}\\
&\leq& C\|\nabla \tilde{\nu}\|_{H^1(\Omega)}\|\nabla G(g)\|_{H^{1+s}(\Omega)} \leq C(s,c_0) \|g\|_{H^s(\Omega)} 
  \end{eqnarray*}
  for every $s>\frac12$ since $c_0\in H^2(\Omega)$, $\Delta_N^{-1}\colon
  H^s(\Omega)\to H^{s+2}(\Omega)$ for all $s\in [0,1]$ due to (\ref{eq:W2qEstim}),
  and $\|fg\|_{H^1}\leq C_s\|f\|_{H^s}\|g\|_{H^1}$ if $s>\frac{d}2$.

Finally, $B_2$ satisfies
  \begin{eqnarray*}
    \|B_2g\|_{H^{-1}_{(0)}(\Omega)}&\leq& 2\|(\nabla \tilde{\nu}(c_0)) \cdot \nabla^2 G(g)-\nabla \tilde{\nu}(c_0) g\|_{L^2(\Omega)}\\
 &\leq& C'\|\nabla c_0\|_{L^6(\Omega)} \|g\|_{L^3(\Omega)} \leq C(c_0) \|g\|_{H^\frac12(\Omega)}
  \end{eqnarray*}
  due to (\ref{eq:W2qEstim}). 
\end{proof}

Because of the triangle structure of $\A$, it is sufficient to prove that $-A_1$ and $P_\sigma(\Div \widetilde{\tn{S}}(c_0,\cdot))$ generate analytic semigroups in order to have the same for $-\A$.
\begin{lem}\label{lem:A1}
  Let $A_1$ and $H_1$ be as above. Then $-A_1$ generates a bounded analytic semigroup.
 Moreover, $\|A_1 (c',g)^T\|_{H_1}+ |m(c')|$ is equivalent to $\|g\|_{H^1}+ \|c'\|_{H^3}$.
\end{lem}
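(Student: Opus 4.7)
The plan is to recognise $A_1$, up to an invertible diagonal similarity, as the first-order reduction of the damped wave/plate equation
\[
\partial_t^2 c' - \Delta_N(a(c_0)\partial_t c') + \tfrac{\eps}{\alpha\beta^2}\Delta_N\Div(\rho_0^{-4}\nabla c')=0
\]
highlighted in the introduction, and then to invoke the analyticity theorem of Chen--Triggiani~\cite{ChenTriggiani}. The constant subspace $\{(c',0):c'\equiv\text{const}\}$ sits in $\ker A_1$, so it suffices to work on the invariant complement $H^1_{(0)}(\Omega)\times H^{-1}_{(0)}(\Omega)\subset H_1$. There $P_0 g=g$, and the rescaling $v:=\beta^{-1}g$ turns $\partial_t u=-A_1 u$ into the canonical second-order form $\partial_t c'=v$, $\partial_t v = -L_0 c' - M_0 v$, with
\[
L_0 := \tfrac{\eps}{\alpha\beta^2}\Delta_N\Div(\rho_0^{-4}\nabla\cdot),\qquad M_0 := -\Delta_N(a(c_0)\,\cdot\,).
\]

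The crucial step is to verify that $L_0$ and $M_0$ are positive self-adjoint on the pivot space $H:=H^{-1}_{(0)}(\Omega)$ equipped with the inner product \eqref{eq:HzeroInnerProductId}, and that $M_0$ is comparable to $L_0^{1/2}$. Using \eqref{eq:DeltaNId} together with integration by parts (exploiting $\partial_n c'|_{\partial\Omega}=0$ from the domain),
\[
(L_0 w,w)_H = \tfrac{\eps}{\alpha\beta^2}\int_\Omega \rho_0^{-4}|\nabla w|^2\sd x\sim \|w\|_{H^1_{(0)}}^2,\qquad (M_0 v,v)_H = \int_\Omega a(c_0)\,v^2\sd x\sim\|v\|_{L^2_{(0)}}^2,
\]
since $a(c_0),\rho_0^{-4}$ are bounded between positive constants by Assumption~\ref{assump:3}. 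Consequently $D(L_0^{1/2})=H^1_{(0)}(\Omega)$ and $D(M_0^{1/2})=L^2_{(0)}(\Omega)$ with equivalent norms, placing us in the critical matching ($\alpha=\tfrac12$) of the Chen--Triggiani hypothesis. Their theorem then produces an analytic semigroup generator on the energy space $D(L_0^{1/2})\times H=H^1_{(0)}(\Omega)\times H^{-1}_{(0)}(\Omega)$; the spectral gap of $L_0$ upgrades this to a \emph{bounded} analytic semigroup. Undoing the diagonal rescaling and adjoining the trivial action on the constant kernel yields the same for $-A_1$ on all of $H_1$.

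For the norm equivalence let $(F_1,F_2)^T:=A_1(c',g)^T$ with $(c',g)\in\mathcal{D}(A_1)$, so $g\in H^1_{(0)}$ and $c'\in H^3\cap H^2_N$. The first row gives $F_1=-\beta^{-1}g$ and hence $\|g\|_{H^1}\leq|\beta|\,\|F_1\|_{H^1}$. Applying $(-\Delta_N)^{-1}$ to the second row via \eqref{eq:DeltaNId} shows that $h:=\tfrac{\eps}{\alpha\beta}\Div(\rho_0^{-4}\nabla c')-a(c_0)g$ satisfies $\|\nabla h\|_{L^2}\leq C\|F_2\|_{H^{-1}_{(0)}}$, and since $c_0\in H^2(\Omega)$ acts as an $H^1$-multiplier in dimensions $d\leq 3$,
\[
\|\nabla\Div(\rho_0^{-4}\nabla c')\|_{L^2(\Omega)}\leq C'\bigl(\|F_1\|_{H^1}+\|F_2\|_{H^{-1}_{(0)}}\bigr).
\]
The uniformly elliptic Neumann problem for $\Div(\rho_0^{-4}\nabla\cdot)$ has only constants in its kernel, so standard $H^3$-regularity of the variable-coefficient problem (in the spirit of \eqref{eq:W2qEstim}) produces $\|c'\|_{H^3}\leq C(\|A_1(c',g)^T\|_{H_1}+|m(c')|)$, while the reverse inequality is a direct operator bound. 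The main obstacle in this programme is fitting the variable, only moderately smooth coefficients $a(c_0)$, $\rho_0^{-4}$ (arising from $c_0\in H^2$) into the Chen--Triggiani framework on the non-standard pivot $H^{-1}_{(0)}$, while respecting the critical balance $M_0\sim L_0^{1/2}$ that---as flagged in Remark~\ref{rem:Spectrum}---can push the spectral angle arbitrarily close to $\pi/2$.
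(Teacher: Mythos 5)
Your proposal is correct and follows essentially the same route as the paper: restriction to the invariant subspace $H^1_{(0)}(\Omega)\times H^{-1}_{(0)}(\Omega)$, identification of the positive self-adjoint pair on the pivot $H^{-1}_{(0)}(\Omega)$ via \eqref{eq:DeltaNId}, and verification of the critical Chen--Triggiani comparability $M_0\sim L_0^{1/2}$ (the paper routes this through $c_1(-\Delta_N)\le A^{1/2}\le c_2(-\Delta_N)$ using Krein's corollary, which you compress into the form comparisons). The only departure is the norm equivalence, where you argue directly by elliptic regularity while the paper deduces it from the invertibility of $A_1|_{H_0}$ plus boundedness of $A_1$ on $(H^3\cap H^2_N)\times H^1_{(0)}$; both are fine.
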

\begin{proof}
Let $H_0= H^1_{(0)}(\Omega)\times \Hzero(\Omega)$. Then $A_1$ leaves $H_0$ invariant. We first show that $-A_1|_{H_0}$ generates an analytic semigroup. To this end we use that
\begin{eqnarray*}
  -A_1 |_{H_0} &=& \left(
    \begin{array}{cc}
      0& \beta^{-1}I  \\
       -A &    -B
    \end{array}
  \right),\quad \text{where}\ 
  A= \frac\eps{\alpha\beta} \Delta_N\Div (\rho^{-4}_0\nabla \cdot),
  B= -\Delta_N (a(c_0)\cdot). 
\end{eqnarray*}
Here $\mathcal{D}(A)= H^3(\Omega)\cap H^2_N(\Omega)\cap H^1_{(0)}(\Omega)$, $\mathcal{D}(B)= H^1_{(0)}(\Omega)$.
Without loss of generality let $\beta=1$. (Otherwise replace $A_1$ by $\beta A_1$.) Because of \cite[Theorem 1.1]{ChenTriggiani}, $-A_1|_{H_0}$ generates an analytic semigroup on $H_0$ provided that the following conditions are satisfied:
\begin{enumerate}
\item[{\bf H1}] $A,B$ are positive self-adjoint operators on $\Hzero(\Omega)$ with dense domains $\mathcal{D}(A)$, $\mathcal{D}(B)$. $A$ has a compact resolvent.
\item[{\bf H2}] $\mathcal{D}(A^\frac12)=\mathcal{D}(B)$ and there are constants $0<\rho_1<\rho_2 <\infty$ such that $\rho_1 A^{\frac12}\leq B \leq \rho_2 A^{\frac12}$.
\end{enumerate}
We note that, if these conditions are satisfied, then $e^{-A_1|_{H_0}t}$ is an exponentially decreasing semigroup of  contractions on $H_0$ equipped with the norm of $\mathcal{D}(A^{\frac12})\times \Hzero(\Omega)$. In particular, $A_1$ is invertible. 

Let us verify the conditions above. First of all, $A,B$ are positive and symmetric since
\begin{eqnarray*}
  (Au,v)_{\Hzero} &=& -\frac{\alpha\beta}\eps\left(\Div(\rho_0^{-4}\nabla u),v\right)_{L^2(\Omega)} = \frac{\alpha\beta}\eps\left( \rho_0^{-4}\nabla u,\nabla v\right)_{L^2(\Omega)} = (u,Av)_{\Hzero}\\
(Bu',v')_{\Hzero} &=& (a(c_0)u',v')_{L^2(\Omega)} =(u',Bv')_{\Hzero}
\end{eqnarray*}
for all $u,v\in \mathcal{D}(A)$, $u',v'\in\mathcal{D}(B)$,
where we have used \eqref{eq:DeltaNId}. Moreover, with the aid of the Lemma of Lax-Milgram and standard elliptic regularity theory one easily shows that $A$ and $B$ are invertible. Hence $A,B$ are self-adjoint. 
In order to verify {\bf H2}, we use that there are constants $c_0,C_0>0$ such that
\begin{equation*}
  c_0 (\nabla u,\nabla u)_{L^2(\Omega) }\leq (A u,u)_{\Hzero}= \frac{\alpha\beta}\eps\left( \rho_0^{-4}\nabla u,\nabla u\right)_{L^2(\Omega)}\leq C_0 (\nabla u,\nabla u)_{L^2(\Omega) }
\end{equation*}
since $\rho_0^{-4}$ is bounded above and below,
where 
\begin{equation*}
  (\nabla u,\nabla u)_{L^2}= -(\Delta_N u,u)_{L^2}= ((-\Delta_N)^2 u,u)_{\Hzero}=\|\Delta_N u\|_{\Hzero}^2.
\end{equation*}
Hence $c_0(-\Delta_N)^2\leq A \leq C_0 (-\Delta_N)^2$ in $\Hzero(\Omega)$. This implies that there are $c_1,c_2>0$ 
\begin{eqnarray*}
  c_1((-\Delta_N)u,u)_{\Hzero}&=&c_1\|(-\Delta_N)^{\frac12}u\|_{\Hzero(\Omega)}^2 \leq \|A^{\frac14} u\|_{\Hzero}=(A^{\frac12}u,u)_{\Hzero}\\
&\leq& c_2\|(-\Delta_N)^{\frac12}u\|_{\Hzero(\Omega)}^2 =  c_2 ((-\Delta_N)u,u)_{\Hzero}
\end{eqnarray*}
because of \cite[Chapter I, Corollary~7.1]{Krein}. Thus $c_1(-\Delta_N)\leq A^{\frac12}\leq c_2(-\Delta_N)$. \cite[Chapter I, Corollary~7.1]{Krein} also implies $\mathcal{D}(A^\frac12)= H^1_{(0)}(\Omega)=\mathcal{D}(-\Delta_N)$.
Moreover, we have that there are $c_3,c_4>0$ such that 
\begin{eqnarray*}
  c_3(-\Delta_N u,u)_{\Hzero} = c_3(u,u)_{L^2}\leq (a(c_0)^{-1} u,u)_{L^2}= (Bu,u)_{\Hzero}\leq c_4(-\Delta_N u,u)_{\Hzero} 
\end{eqnarray*}
for all $u\in \Hone(\Omega)$.
Combining this with the previous estimates, we obtain $\rho_1 A^{\frac12}\leq B\leq  \rho_2  A^{\frac12}$ for some $0<\rho_1\leq \rho_2<\infty$. 

Hence we have proved {\bf H1-H2} and conclude that $-A_1|_{H_0}$ generates a bounded analytic semigroup on $H_0$. Moreover, $H_1=H_0\oplus \{(0,m): m\in \R\}$ and $A_1(I-P_0)=(I-P_0)A_1=0$.
Therefore there is some $\delta\in (\frac{\pi}2,\pi)$ such that $\lambda+A_1 \colon \mathcal{D}(A_1)\to H_1$ is invertible  for all $\lambda \in\Sigma_\delta$. Moreover, the resolvent estimate $\|(\lambda+A_1)^{-1}\|_{\mathcal{L}(H)}\leq \frac{C}{|\lambda|}$ for all $\lambda \in\Sigma_\delta$ follows from the corresponding estimate for $A_1|_{H_0}$. Therefore $-A_1$ generates a bounded analytic semigroup on $H_1$.

The equivalence of norms follows from the invertibility of $A_1|_{H_0}$ and the boundedness of $A_1\colon (H^3(\Omega)\cap H^2_N(\Omega))\times \Hone(\Omega)\to H_1$.
\end{proof}
\begin{rem}\label{rem:Spectrum}
As shown above there is some $\delta>\frac{\pi}2$ such that $\Sigma_\delta \subseteq \rho(-A_1)$. In the special case that $\tilde{\nu}\equiv \nu_0, \tilde{\eta}\equiv \eta_0$, and $\rho_0 \equiv 1$ are constant, we have $a(c_0)\equiv 2\nu_0+\eta_0$ and $\sigma(-A_1)$ consists of the eigenvalues 
\begin{equation*}
  \lambda_n^\pm= \sqrt{\frac{\eps}{\alpha\beta}\mu_n}e^{\pm i\theta}, \quad e^{i\theta}=-\kappa +i\sqrt{1-\kappa^2},\quad \kappa = \frac{\alpha\beta^2 (2\nu_0+\mu_0)}\eps
\end{equation*}
provided that $0<\kappa<1$,
where $\mu_n$ are the eigenvalues of $\Delta_N\Delta$, cf. \cite[Lemma~A.1]{ChenTriggiani}.
Note that $\frac{\pi}2<\theta <\pi$ and $\theta \to \frac{\pi}2$ as $\kappa\to 0$. Hence $\delta>\frac{\pi}2$ above can be arbitrarily close to $\frac{\pi}2$ in certain situations.
\end{rem}

Because of the triangular structure of $A$, we conclude from the latter lemma:
\begin{prop}
  $-\A$ generates a bounded analytic semigroup on $H$. Moreover, $\|\A u\|_H+\|u\|_H$ is equivalent to $\|\vc{w}\|_{H^2}+ \|g\|_{H^1}+ \|c\|_{H^3}$, where $u=(c',g,\vc{w})^T$.
\end{prop}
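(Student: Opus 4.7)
The operator $\mathcal{A}$ has upper triangular block structure with diagonal blocks $A_1$ on $H_1$ and $A_S := -P_\sigma \Div \widetilde{S}(c_0, \tn{D}\,\cdot)$ on $L^2_\sigma(\Omega)$ (with domain $\{\we\in H^2(\Omega)^d\cap L^2_\sigma(\Omega): T_\gamma \we=0\}$), and off-diagonal entry $A_2$. The plan is to verify that each diagonal block generates a bounded analytic semigroup, and then deduce the same for $\mathcal{A}$ from the resolvent identity together with a boundedness property of $A_2$ relative to $\mathcal{D}(A_S)$.

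For $-A_1$, Lemma~\ref{lem:A1} already gives the generation of a bounded analytic semigroup on $H_1$ together with the equivalence $\|A_1(c',g)^T\|_{H_1}+|m(c')|\sim \|c'\|_{H^3(\Omega)}+\|g\|_{H^1(\Omega)}$. For $-A_S$, this is the generalized Stokes operator with variable viscosity $\tilde\nu(c_0),\tilde\eta(c_0)\in C^1(\ol\Omega)$ and Navier boundary conditions with friction $\tilde\gamma(c_0)\geq 0$. Since $c_0\in H^2(\Omega)\hookrightarrow C^0(\ol\Omega)$ and the coefficients are bounded away from zero, the associated bilinear form on $H^1_n(\Omega)\cap L^2_\sigma(\Omega)$ is continuous and, by Korn's inequality combined with Assumption~\ref{assump:3} (which excludes the existence of non-trivial Killing vector fields satisfying the Navier condition when $\inf\gamma=0$), coercive. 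Standard variational theory for self-adjoint sectorial operators on Hilbert spaces then shows that $-A_S$ generates a bounded analytic semigroup on $L^2_\sigma(\Omega)$ and that $A_S$ is an isomorphism from its domain onto $L^2_\sigma(\Omega)$ with $\|A_S\we\|_{L^2}\sim \|\we\|_{H^2(\Omega)}$.

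By the triangular structure, for $\lambda$ in a common sector $\Sigma_\delta$ with $\delta>\pi/2$ contained in $\rho(-A_1)\cap \rho(-A_S)$, the resolvent of $\mathcal{A}$ is
\begin{equation*}
(\lambda+\mathcal{A})^{-1}=\begin{pmatrix}(\lambda+A_1)^{-1} & -(\lambda+A_1)^{-1}A_2(\lambda+A_S)^{-1}\\ 0 & (\lambda+A_S)^{-1}\end{pmatrix}.
\end{equation*}
The key observation is that $A_2\we=(0,-\Div_n\Div\widetilde{S}(c_0,\tn{D}\we))^T$, which by the definition of $\Div_n$ and the boundedness of $\Div\widetilde{S}(c_0,\tn{D}\we)\in L^2(\Omega)^d$ for $\we\in H^2$ defines a bounded operator $A_2\colon \mathcal{D}(A_S)\to H_1$ (the first component of $H_1$ plays no role). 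Using analyticity of the semigroup generated by $-A_S$, we have $\|A_S(\lambda+A_S)^{-1}\|_{\mathcal{L}(L^2_\sigma)}\leq C$ uniformly on $\Sigma_\delta$, and hence
\begin{equation*}
\|A_2(\lambda+A_S)^{-1}\|_{\mathcal{L}(L^2_\sigma,H_1)}\leq C\|A_S(\lambda+A_S)^{-1}\|_{\mathcal{L}(L^2_\sigma)}\leq C'.
\end{equation*}
Combining this with the resolvent estimate $\|(\lambda+A_1)^{-1}\|_{\mathcal{L}(H_1)}\leq C/|\lambda|$ yields $\|(\lambda+\mathcal{A})^{-1}\|_{\mathcal{L}(H)}\leq C/|\lambda|$ on $\Sigma_\delta$, so $-\mathcal{A}$ generates a bounded analytic semigroup on $H$.

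For the norm equivalence, write $\mathcal{A}u=(A_1(c',g)^T+A_2\we,A_S\we)^T$ for $u=(c',g,\we)^T\in\mathcal{D}(\mathcal{A})$. The estimate of $A_S$ gives $\|\we\|_{H^2}\sim \|A_S\we\|_{L^2}$, and since $A_2$ is bounded from $\mathcal{D}(A_S)$ to $H_1$ we obtain $\|A_1(c',g)^T\|_{H_1}\leq \|\mathcal{A}u\|_H+C\|\we\|_{H^2}\leq C(\|\mathcal{A}u\|_H+\|A_S\we\|_{L^2})$. The equivalence for $A_1$ from Lemma~\ref{lem:A1} (after controlling the one-dimensional kernel on the first component by $\|u\|_H$) then gives $\|c'\|_{H^3}+\|g\|_{H^1}+\|\we\|_{H^2}\leq C(\|\mathcal{A}u\|_H+\|u\|_H)$, and the reverse estimate is immediate from the boundedness of $A_1$ and $A_S$ on their domains. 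The main technical obstacle is the rigorous justification of the analytic semigroup property of $-A_S$ under Assumption~\ref{assump:3}; this is classical but relies crucially on Korn's inequality for the Navier boundary condition and the exclusion of axes of symmetry in the case $\inf\gamma=0$.
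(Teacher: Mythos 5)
Your proof is correct and follows essentially the same route as the paper: triangular resolvent formula, Lemma~\ref{lem:A1} for the $A_1$ block, self-adjointness and coercivity of the Stokes block with Navier boundary conditions (the paper's Theorem~\ref{thm:Positivity}), uniform boundedness of $A_2(\lambda+A_\gamma)^{-1}$ via $\|A_\gamma(\lambda+A_\gamma)^{-1}\|\leq C$ on the sector, and the norm equivalence from the diagonal blocks. The only point to flag is that the equivalence $\|A_S\we\|_{L^2}\sim\|\we\|_{H^2}$ does not follow from the variational/form method alone --- it requires $H^2$ elliptic regularity for the Stokes system with Navier boundary conditions, which the paper obtains by citing \cite[Lemma~4]{ModelH} (resp.\ \cite[Theorem~5.2.3]{Habilitation}) rather than from ``standard variational theory.''
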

\begin{proof}
  Let $A_\gamma\ve\equiv A_\gamma(c_0) \ve= - P_\sigma \Div \widetilde{\tn{S}}(c_0,\tn{D}\ve)=-P_\sigma\Div (\tilde{\nu}(c_0)\tn{D}\ve)$ for all $\ve\in       \mathcal{D}(A_\gamma)   = \left\{\ue\in H^2(\Omega)^d\cap L^2_\sigma(\Omega): T_\gamma \ue =0\right\}$. Then $-A_\gamma(c_0) \colon \mathcal{D}(A_\gamma)\subset L^2_\sigma(\Omega) \to L^2_\sigma(\Omega)$  generates a bounded analytic $C^0$-semigroup because of Theorem~\ref{thm:Positivity} below. 
  Moreover, $A_\gamma$ is invertible and  $\|A \ve\|_{L^2_\sigma(\Omega)}$ is equivalent to  $\|\ve\|_{H^2(\Omega)}$ because of \cite[Lemma~4]{ModelH}. Hence there is some $\delta>\frac{\pi}2$ such that $(\lambda +A)^{-1}$ exists for all $\lambda \in \Sigma_\delta$  and
  \begin{equation*}
    (\lambda +A)^{-1} = \left(
      \begin{array}{cc}
        (\lambda +A_1)^{-1} & -(\lambda +A_1)^{-1}A_2(\lambda +A_\gamma)^{-1} \\
        0 & (\lambda +A_\gamma)^{-1}
      \end{array}
\right).
  \end{equation*}
  Using 
  \begin{eqnarray*}
    |\lambda|\|(\lambda +A_\gamma)^{-1}\|_{\mathcal{L}(L^2_\sigma)}+ \|A_\gamma(\lambda +A_\gamma)^{-1}\|_{\mathcal{L}(L^2_\sigma)}\leq C_\delta\quad \text{for all}\ \lambda\in\Sigma_\delta
  \end{eqnarray*}
  we conclude that
  \begin{eqnarray*}
    \|A_2(\lambda +A_\gamma)^{-1}\vc{f}_2\|_{H_1} &\leq& C\|(\lambda +A_\gamma)^{-1}\vc{f}_2\|_{H^2(\Omega)}\\
&\leq& C'\|A_\gamma(\lambda +A_\gamma)^{-1}\vc{f}_2\|_{L^2_\sigma(\Omega)} \leq C''\|\vc{f}_2\|_{L^2(\Omega)}
  \end{eqnarray*}
  for all $\vc{f}_2\in L^2_\sigma(\Omega)$, $\lambda\in\Sigma_\delta$.
  Therefore we easily obtain
  \begin{equation*}
    \|(\lambda +\A)^{-1}\|_{H}\leq \frac{C}{|\lambda|} \qquad \text{uniformly in}\ \lambda\in \Sigma_\delta.
  \end{equation*}
  Hence $-\A$ generates a bounded analytic $C^0$-semigroup. Finally, the equivalence of norms can be easily shown using the resolvent identity above for $\lambda=1$ and the corresponding statement in Lemma~\ref{lem:A1}.
\end{proof}
\begin{cor}\label{cor:Generation}
  Let $c_0\in H^2(\Omega)$. Then
  $\A+\B$ generates an analytic $C^0$-semigroup. Moreover, $\|(\A+\B) u\|_{H}+ \|u\|_{H}$ is equivalent to $\|\we\|_{H^2}+ \|g\|_{H^1}+ \|c\|_{H^3}$, where $u=(c,g,\we)^T\in \mathcal{D}(\A)$.
\end{cor}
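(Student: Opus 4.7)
My plan is to invoke a standard perturbation theorem for analytic semigroup generators. Since the preceding proposition shows that $-\A$ generates a bounded analytic semigroup on $H$ with $\mathcal{D}(\A+\B)=\mathcal{D}(\A)$, it suffices to verify that $\B$ is $\A$-bounded with arbitrarily small relative bound, i.e.\ that for every $\varepsilon>0$ there is some $C_\varepsilon>0$ with
\begin{equation*}
\|\B u\|_H \leq \varepsilon \|\A u\|_H + C_\varepsilon \|u\|_H \qquad \text{for all}\ u\in\mathcal{D}(\A).
\end{equation*}
A Kato-type perturbation theorem for sectorial operators would then yield that $-(\A+\B)$ also generates an analytic $C^0$-semigroup on $H$ (on a possibly slightly smaller sector).

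To establish the relative bound, I would use that $\B u = (0,B_2 g,-B_1 g)^T$, so $\|\B u\|_H=\|B_2 g\|_{\Hzero}+\|B_1 g\|_{L^2}$, and Lemma~\ref{lem:LowerOrder} bounds both quantities by $C\|g\|_{H^s}$ for some $s\in(\tfrac12,1)$. Next I would interpolate: the inequality \eqref{eq:InterpolationInequality} gives $\|g\|_{L^2}\leq \|g\|_{\Hzero}^{1/2}\|g\|_{\Hone}^{1/2}$, and standard Bessel-potential interpolation yields $\|g\|_{H^s}\leq C\|g\|_{H^1}^{s}\|g\|_{L^2}^{1-s}$. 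Combining these,
\begin{equation*}
\|g\|_{H^s}\leq C\|g\|_{H^1}^{(1+s)/2}\|g\|_{\Hzero}^{(1-s)/2}.
\end{equation*}
Since $(1+s)/2<1$, Young's inequality gives $\|g\|_{H^s}\leq \varepsilon\|g\|_{H^1}+C_\varepsilon\|g\|_{\Hzero}$. The norm equivalence in the previous proposition bounds $\|g\|_{H^1}$ by $\|\A u\|_H+\|u\|_H$, while $\|g\|_{\Hzero}\leq \|u\|_H$ follows directly from the definition of $H$, which supplies the required relative bound.

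For the norm equivalence claimed in the corollary, the triangle inequality combined with the relative bound at $\varepsilon=\tfrac12$ yields
\begin{equation*}
\tfrac12\|\A u\|_H - C\|u\|_H \leq \|(\A+\B) u\|_H \leq \tfrac32\|\A u\|_H+C\|u\|_H,
\end{equation*}
so that $\|(\A+\B) u\|_H+\|u\|_H$ is equivalent to $\|\A u\|_H+\|u\|_H$, and the latter is equivalent to $\|\we\|_{H^2}+\|g\|_{H^1}+\|c\|_{H^3}$ by the previous proposition.

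I expect no serious obstacle: the argument is essentially routine once Lemma~\ref{lem:LowerOrder} is in hand. The only mild subtlety is ensuring that the exponent $(1+s)/2$ in the interpolation step stays strictly below one, so that Young's inequality produces an arbitrarily small coefficient in front of $\|g\|_{H^1}$; the fact that Lemma~\ref{lem:LowerOrder} provides an estimate for some $s<1$ is precisely what makes the perturbation argument work.
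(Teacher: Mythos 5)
Your argument is correct and follows essentially the same route as the paper: a relative bound $\|\B u\|_H\leq \eps\|\A u\|_H+C_\eps\|u\|_H$ obtained from Lemma~\ref{lem:LowerOrder}, interpolation between $H^1$ and $L^2$, and the inequality \eqref{eq:InterpolationInequality}, followed by a standard perturbation theorem (the paper cites Pazy) and the triangle inequality for the norm equivalence. The paper simply fixes $s=\tfrac34$ in Lemma~\ref{lem:LowerOrder}, yielding the exponents $\tfrac18$ and $\tfrac78$, where you keep $s\in(\tfrac12,1)$ generic.
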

\begin{proof}
  The corollary follows from the fact that for every $\eps>0$ there is a constant $C_\eps>0$ such that
  \begin{equation*}
    \|\B u\|_{H} \leq \eps \|\A u\|_{H} + C_\eps \|u\|_{H}
  \end{equation*}
  for all $u\in \mathcal{D}(A)$ and a standard result from semigroup theory, cf. e.g. \cite[Chapter 3, Theorem 2.1]{Pazy}. 
  The latter estimate follows from
  \begin{eqnarray*} 
    \|\B u\|_{H} &\leq& C(c_0)\|g\|_{H^\frac34(\Omega)}
    \leq  C(c_0)\|g\|_{L^2(\Omega)}^\frac14\|g\|_{H^1(\Omega)}^{\frac34} 
  \leq  C(c_0)\|g\|_{H^{-1}_{(0)}(\Omega)}^\frac18\|g\|_{H^1(\Omega)}^{\frac78} \\
    &\leq& C(c_0)\|\A u\|_{H}^\frac78\|u\|_{H}^\frac18 \leq \eps\|\A u\|_{H} + C_\eps(c_0) \|u\|_H
  \end{eqnarray*}
for every $\eps>0$,
  where $u=(c,g,\we)$ and we have used Lemma~\ref{lem:LowerOrder}, $H^{\frac34}(\Omega)=(L^2(\Omega),H^1(\Omega))_{\frac34,2}$, 
  and \eqref{eq:InterpolationInequality}.
\end{proof}
\begin{lem} Let $\A, \mathcal{D}(\A), H$ be defined as above. Then
  \begin{equation*}
    (\mathcal{D}(\A),H)_{\frac12,2} =  H^2_N(\Omega)\times L^2_{(0)}(\Omega)\times(H^1(\Omega)^d\cap L^2_\sigma(\Omega)).
  \end{equation*}
\end{lem}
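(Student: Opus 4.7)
The plan is to exploit the block-triangular structure of $\A$ and reduce the problem to a componentwise interpolation. Writing $A_\gamma\we := -P_\sigma \Div \widetilde{\tn{S}}(c_0, \tn{D}\we)$ with $\mathcal{D}(A_\gamma) = \{\we \in H^2(\Omega)^d \cap L^2_\sigma(\Omega) : T_\gamma \we = 0\}$, the off-diagonal block $A_2$ satisfies
\begin{equation*}
\|A_2 \we\|_{H_1} \leq C(c_0) \|\we\|_{H^2(\Omega)} \leq C'(c_0)\bigl(\|A_\gamma \we\|_{L^2_\sigma(\Omega)} + \|\we\|_{L^2(\Omega)}\bigr)
\end{equation*}
by elliptic regularity for the Stokes system with Navier boundary conditions (cf.\ \cite[Lemma~4]{ModelH}). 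Hence the graph norms of $\A$ and of the block-diagonal operator $\A_0 := \diag(A_1, A_\gamma)$ are equivalent on the common domain $\mathcal{D}(A_1) \times \mathcal{D}(A_\gamma)$, and the standard product property of real interpolation yields
\begin{equation*}
(\mathcal{D}(\A), H)_{1/2, 2} = (\mathcal{D}(A_1), H_1)_{1/2, 2} \times (\mathcal{D}(A_\gamma), L^2_\sigma(\Omega))_{1/2, 2}.
\end{equation*}

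For the velocity factor I would use that $A_\gamma$ is a positive selfadjoint operator on $L^2_\sigma(\Omega)$, so its real interpolation space at the midpoint coincides with the form domain $\mathcal{D}(A_\gamma^{1/2})$. A direct computation via the associated sesquilinear form
\begin{equation*}
(A_\gamma \we, \ue)_{L^2} = \int_\Omega 2\tilde{\nu}(c_0) \tn{D}\we : \tn{D}\ue\,dx + \int_{\partial\Omega} \tilde{\gamma}(c_0) \we_\tau \cdot \ue_\tau\,d\sigma
\end{equation*}
shows that $\mathcal{D}(A_\gamma^{1/2}) = H^1(\Omega)^d \cap L^2_\sigma(\Omega)$: the first-order Navier condition $T_\gamma\we = 0$ is lost at the $H^1$ level since it would require a trace with regularity $s > 3/2$, whereas the zeroth-order condition $\vc{n}\cdot \we = 0$ is already encoded in $L^2_\sigma$. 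This is the step I expect to be the main obstacle; an alternative approach is a Grisvard-type interpolation theorem for Sobolev spaces with boundary conditions.

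For the $(c', g)$-factor, since $\mathcal{D}(A_1)$ is itself a Cartesian product, product interpolation gives
\begin{equation*}
(\mathcal{D}(A_1), H_1)_{1/2, 2} = (H^3 \cap H^2_N, H^1)_{1/2, 2} \times (\Hone, \Hzero)_{1/2, 2}.
\end{equation*}
For the first factor I would realize $A := -\Delta_N + I$ as a positive selfadjoint operator on $L^2(\Omega)$; by elliptic regularity $\mathcal{D}(A^{1/2}) = H^1(\Omega)$, $\mathcal{D}(A) = H^2_N(\Omega)$, and $\mathcal{D}(A^{3/2}) = H^3(\Omega) \cap H^2_N(\Omega)$. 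The identity $(\mathcal{D}(A^{\alpha_0}), \mathcal{D}(A^{\alpha_1}))_{\theta, 2} = \mathcal{D}(A^{(1-\theta)\alpha_0 + \theta\alpha_1})$ for positive selfadjoint operators on Hilbert space yields $H^2_N(\Omega)$ at $\alpha_0 = 3/2$, $\alpha_1 = 1/2$, $\theta = 1/2$. For the second factor, \eqref{eq:DeltaNId} together with the inner product \eqref{eq:HzeroInnerProductId} shows $\|(-\Delta_N)^{1/2} u\|_{\Hzero} = \|u\|_{L^2}$ for $u \in \Hone$, hence $\mathcal{D}((-\Delta_N)^{1/2}) = L^2_{(0)}(\Omega)$ and $(\Hone, \Hzero)_{1/2, 2} = L^2_{(0)}(\Omega)$. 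Multiplying out the three factors gives the claim.
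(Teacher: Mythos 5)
Your proof is correct, and the componentwise reduction is exactly the paper's strategy: the paper likewise relies on the equivalence of the graph norm of $\A$ with the product norm (established in the proposition preceding the lemma) and then interpolates the three couples separately. Where you diverge is in how the three component identities are established. The paper proves $(\Hone(\Omega),\Hzero(\Omega))_{\frac12,2}=L^2_{(0)}(\Omega)$ by viewing the three spaces as retracts $P_0H^1$, $P_0L^2$, $P_0H^{-1}$ and citing Triebel's theorem on interpolation of complemented subspaces; it proves $(H^3(\Omega)\cap H^2_N(\Omega),H^1(\Omega))_{\frac12,2}=H^2_N(\Omega)$ by transporting the couple $(H^1,H^{-1})$ through the isomorphism $(1-\Delta_N)^{-1}$; and it handles the velocity component by a separate result (Lemma~\ref{lem:Interpol}), proved via the trace method of real interpolation combined with the maximal regularity statement of Theorem~\ref{thm:MaxReg}. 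You instead treat all three components uniformly through the identity $(\mathcal{D}(A^{\alpha_0}),\mathcal{D}(A^{\alpha_1}))_{\theta,2}=\mathcal{D}(A^{(1-\theta)\alpha_0+\theta\alpha_1})$ for positive self-adjoint operators on a Hilbert space, which for the Stokes part reduces to the identification of $\mathcal{D}(A_\gamma^{1/2})$ with the form domain $H^1(\Omega)^d\cap L^2_\sigma(\Omega)$ (Kato's second representation theorem). Both routes are sound; yours is more uniform and avoids the parabolic detour via maximal regularity, at the price of needing that form-domain identification --- the step you flagged as the main obstacle. In this paper that step is available: Theorem~\ref{thm:Positivity} shows that $A_\gamma$ with the stated domain is precisely the self-adjoint operator generated by the coercive, symmetric form $(2\tilde{\nu}(c_0)\tn{D}\we,\tn{D}\ue)_{L^2(\Omega)}+(\tilde{\gamma}(c_0)\we_\tau,\ue_\tau)_{L^2(\partial\Omega)}$ on $H^1(\Omega)^d\cap L^2_\sigma(\Omega)$ (coercivity coming from Korn's inequality \eqref{eq:SecondKorn} and the symmetry assumption on $\Omega$ when $\inf\gamma=0$), so Kato's theorem applies and no separate Grisvard-type argument is needed.
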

\begin{proof}
  We only need to show that
  \begin{eqnarray*}
    (H^1_{(0)}(\Omega),\Hzero(\Omega))_{\frac12,2} &=& L^2_{(0)}(\Omega),\\
    (H^3(\Omega)\cap H^2_N(\Omega),H^1(\Omega))_{\frac12,2} &=& H^2_N(\Omega),\\
    (\mathcal{D}(A_\gamma),L^2_\sigma(\Omega))_{\frac12,2} &=& H^1(\Omega)^d\cap L^2_\sigma(\Omega),
  \end{eqnarray*}
where $\mathcal{D}(A_\gamma)=\{\ue \in H^2(\Omega)^d\cap L^2_\sigma(\Omega):T_\gamma \ue =0\}$.
  The first equality follows from $\Hone(\Omega)=P_0H^1(\Omega), L^2_{(0)}(\Omega)= P_0 L^2(\Omega), \Hzero(\Omega) = P_0 H^{-1}(\Omega)$ and \cite[Section 1.2.4, Theorem]{Triebel1}. The second identity is proved using that $H^3(\Omega)\cap H^2_N(\Omega)=(1-\Delta_N)^{-1} H^1(\Omega)$, $H^1(\Omega)= (1-\Delta_N)^{-1} H^{-1}(\Omega)$, and $H^2_N(\Omega)= (1-\Delta_N)^{-1}L^2(\Omega)$. The third identity follows from Lemma~\ref{lem:Interpol}, below.
\end{proof}

\noindent
\begin{proof*}{of Theorem~\ref{thm:LinearPart}}
As seen at the beginning of Section~\ref{sec:LinearPart} we can assume without loss of generality that $\vc{a}\equiv 0$.
Let $(\vc{f}_1,f_2,0, \ve_0,c_0')\in Y_T$ and $f\in L^2(0,T;H)$ be defined as in \eqref{eq:AbstractEvo} and extend $f$ by zero for $t\geq T$. Moreover, let $g_0=\Div \ve_0$, $\vc{w}_0=P_\sigma \ve_0$, and let $u_0$ be as in \eqref{eq:AbstractEvo2}.

  Applying Theorem~\ref{thm:MaxReg}  there is a unique solution ${u}$ of
  \begin{eqnarray*}
    \frac{du}{dt}(t) + \A u(t) + \mathcal{B}u(t) &=& f(t),\qquad t>0,\\
    u(0)&=& u_0 
  \end{eqnarray*}
  and
  \begin{equation*}
    \|(\partial_t {u}, (\A+\B){u})\|_{L^2(0,\infty;H)} \leq C \left(\|f\|_{L^2(0,\infty;H)} +\|u_0\|_{(\mathcal{D}(\A),H)_{\frac12,2}}\right)
  \end{equation*} 
  Using that $\|{u}\|_{L^2(0,T_0;H)}\leq C(T_0)\left( \|\partial_t {u}\|_{L^2(0,T_0;H)}+\|u_0\|_{H} \right)$ for any fixed $0<T_0<\infty$, we obtain that $u$ restricted to $(0,T)$ satisfies
  \begin{equation*}
    \|({u},\partial_t {u}, (\A+\B){u})\|_{L^2(0,T;H)} \leq C(T_0) \left(\|f\|_{L^2(0,T;H)} +\|u_0\|_{(\mathcal{D}(\A),H)_{\frac12,2}}\right)
  \end{equation*} 
  uniformly in $0<T\leq T_0$ and $(f,u_0)$. 
  Hence $u=(c',g,\we)$ solves \eqref{eq:lin2'}-\eqref{eq:Split2} with $G(\Div \ve)=\Delta_N^{-1}g$. 
Therefore $(\ve,c')$ with $\ve=\we+ \nabla \Delta_N^{-1} g$ solves \eqref{eq:lin1'}-\eqref{eq:lin4'}, which implies 
\begin{equation*}
L(c_0)
\begin{pmatrix}
  \ve\\ c'
\end{pmatrix}
=
\begin{pmatrix}
  \vc{f}_1\\ 0\\ f_2
\end{pmatrix}.
\end{equation*}
 The estimate of $(\ve,c')\in X_T$, the continuity of $L(c_0)^{-1}\colon Y_T\to X_T$ follows from the estimate above and the equivalence of norms stated in Corollary~\ref{cor:Generation}.
\end{proof*}

\appendix
\section{Stokes Operator with Navier Boundary Conditions}

In this appendix we summarize some results for the Stokes operator with variable viscosity in the case of Navier boundary conditions. More detailed information can be found in \cite[Chapter~5]{Habilitation}.

We consider
 \begin{equation*}
   A_\gamma(c) \colon  \mathcal{D}(A_\gamma(c))\subset L^2_\sigma(\Omega)\to L^2_\sigma(\Omega)\colon \ve\mapsto A_\gamma(c)\ve:=-P_2\Div (2\nu(c)D\ve),  
 \end{equation*}
 where $c\in W^1_q(\Omega)$, $q>d$, and
 \begin{equation*}
      \mathcal{D}(A_\gamma(c))   = \left\{\ue\in H^2(\Omega)^d\cap L^2_\sigma(\Omega): (2\vc{n}\cdot\nu(c)\tn{D} \ue)_\tau + \gamma(c) \ue_\tau =0\right\}.
 \end{equation*}
 Here we assume that $c\in W^1_q(\Omega)$ for some $q>d$, $\gamma\in C^1(\R)$ with $0\leq \gamma(s) <\infty$ for all $s\in\R$, and $\Omega\subset \R^d$, $d=2,3$, is a bounded domains with $C^3$-boundary. -- We note that $H^2(\Omega)\hookrightarrow W^1_q(\Omega)$ for some $q>d$ if $d=2,3$. -- If $\inf_{s\in\R}\gamma(s)=0$ we assume additionally that $\Omega$ does not have an axis of symmetry, i.e., $R= \{0\}$, where
\begin{alignat}{2}\label{eq:R1}
  R&= \{\ve\in H^1_n(\Omega): \ve(x)= \vc{a}+ \vc{b}\wedge x, \vc{a},\vc{b}\in \R^3  \}&\quad& \text{if}\ d=3,\\
R&=\left\{\ve\in H^1_n(\Omega): \ve(x)= \vc{a}+b 
\begin{pmatrix}
  -x_2\\ x_1
\end{pmatrix}, \vc{a}\in \R^2, b\in \R
\right\} &\quad& \text{if}\ d=2.
\end{alignat}
In this case we have the Korn inequality
\begin{equation}
  \label{eq:SecondKorn}
  \|\ve\|_{H^1} \leq C\|\tn{D}\ve\|_{L^2} \qquad \text{for all} \ \ve\in H^1_n(\Omega),
\end{equation}
cf. Ne\v{c}as~\cite[Theorem~3.5]{Necas} for the case $d=3$. If $d=2$, the inequality follows from the three dimensional estimate by  extending $\ve\in H^1_n(\Omega)^2$ to $\tilde{\ve}(x_1,x_2,x_3)= 
(
  \ve_1(x_1,x_2), \ve_2(x_1,x_2), 0
)^T
\in H^1_n(\Omega\times (-1,1))$.

Because of 
\begin{eqnarray}\label{eq:WeakBCNS}
  -(\Div (2\nu(c) \tn{D}\ve), \we)_{L^2(\Omega)} 
&=& (2\nu(c)\tn{D}\ve,\tn{D}\we)_{L^2(\Omega)} +  (\gamma(c)\ve,\we)_{L^2(\partial\Omega)}\\\nonumber
&=&
  -(\ve, \Div (2\nu(c) \tn{D}\we)_{L^2(\Omega)}  
\end{eqnarray}
for all $\ve,\we\in \mathcal{D}(A_\gamma(c))$, $A_\gamma(c)$ is a symmetric operator.
Moreover, if $\inf_{s\in\R} \gamma(s)=0$, then $\nu(s)\geq \nu_0>0$ and \eqref{eq:SecondKorn} implies
\begin{equation*}
  -(\Div (2\nu(c) \tn{D}\ve), \ve)_{L^2(\Omega)}\geq c_0\|\ve\|_{H^1(\Omega)}^2 \qquad \text{for all}\ \ve \in \mathcal{D}(A_\gamma)
\end{equation*}
for some $c_0>0$. If $ \gamma_0:=\inf_{s\in\R} \gamma(s)>0$, then one obtains  
\begin{equation*}
  -(\Div (2\nu(c) \tn{D}\ve), \ve)_{L^2(\Omega)}\geq C(\gamma_0)\left(\|\tn{D}\ve\|_{L^2(\Omega)}^2 + \|\ve\|_{L^2(\partial\Omega)}^2\right)\geq C'(\gamma_0)\|\ve\|_{H^1(\Omega)}
\end{equation*}
because of $\|\we\|_{H^1(\Omega)}\leq C\|\tn{D}\we\|_{L^2(\Omega)}$ for any $\we\in H^1_0(\Omega)^d$.
Furthermore, we have:
 \begin{thm}\label{thm:Positivity}
   Let $c\in W^{1}_q(\Omega)$, $q>d$. Then $A_\gamma(c)$ is a positive self-adjoint operator on $L^2_\sigma(\Omega)$.
 \end{thm}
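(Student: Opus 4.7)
The strategy is the classical variational route: realise $A_\gamma(c)$ as the operator associated to a coercive sesquilinear form on the natural energy space $V:=H^1_n(\Omega)\cap L^2_\sigma(\Omega)$, and use elliptic regularity to identify its form-domain-based self-adjoint realisation with the one specified in the statement. Symmetry of $A_\gamma(c)$ on $\mathcal{D}(A_\gamma(c))$ is immediate from \eqref{eq:WeakBCNS}. The coercivity estimates displayed just above the theorem, together with Korn's inequality \eqref{eq:SecondKorn} and the standing assumption on symmetry axes in the case $\gamma_0=0$, yield $(A_\gamma(c)\ve,\ve)_{L^2(\Omega)}\geq c_1\|\ve\|_{H^1(\Omega)}^2$ for some $c_1>0$ and all $\ve\in\mathcal{D}(A_\gamma(c))$. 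Thus $A_\gamma(c)\geq c_1 I$, so by the standard criterion it suffices to verify that $A_\gamma(c)$ is surjective onto $L^2_\sigma(\Omega)$.

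The first concrete step is to introduce the sesquilinear form
\begin{equation*}
a(\ve,\we):=\bigl(2\nu(c)\tn{D}\ve,\tn{D}\we\bigr)_{L^2(\Omega)}+\bigl(\gamma(c)\ve,\we\bigr)_{L^2(\partial\Omega)}
\end{equation*}
on $V\times V$. Boundedness of $a$ is clear since $c\in W^1_q(\Omega)\hookrightarrow C(\overline{\Omega})$ for $q>d$, and coercivity on $V$ is precisely the estimate recalled above. Lax--Milgram then produces, for every $f\in L^2_\sigma(\Omega)$, a unique $\ve\in V$ with $a(\ve,\we)=(f,\we)_{L^2(\Omega)}$ for all $\we\in V$.

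The main technical step, which I expect to be the principal obstacle, is to promote this weak solution to an element of $\mathcal{D}(A_\gamma(c))$. Testing with $\we\in\{\varphi\in C^\infty_0(\Omega)^d:\Div\varphi=0\}$ the boundary term in $a(\ve,\we)$ drops out, and the de~Rham lemma yields $p\in L^2_{(0)}(\Omega)$ with $-\Div(2\nu(c)\tn{D}\ve)+\nabla p=f$ distributionally. The hypothesis $c\in W^1_q(\Omega)$ with $q>d$ ensures $\nu(c),\gamma(c)\in W^1_q(\Omega)$, which is exactly the coefficient regularity required by the $H^2$-theory for Stokes-type systems: interior elliptic regularity, combined with the standard localisation/boundary-flattening argument and difference quotients in the tangential directions (the normal component of $\nabla\ve$ being recovered from $\Div\ve=0$ and $\vc{n}\cdot\ve|_{\partial\Omega}=0$), upgrades this to $\ve\in H^2(\Omega)^d$ and $p\in H^1(\Omega)$. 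Integrating by parts in the weak equation with \emph{arbitrary} $\we\in V$ and invoking the density of tangential traces on $\partial\Omega$ then transfers the natural boundary term of $a$ into the pointwise Navier condition $(2\vc{n}\cdot\nu(c)\tn{D}\ve)_\tau+\gamma(c)\ve_\tau=0$, so $\ve\in\mathcal{D}(A_\gamma(c))$. Finally, applying $P_\sigma$ to the interior equation eliminates $\nabla p$ and gives $A_\gamma(c)\ve=f$.

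This establishes surjectivity of $A_\gamma(c)$, which together with symmetry and the strict lower bound $A_\gamma(c)\geq c_1 I$ yields that $A_\gamma(c)$ is a positive self-adjoint operator on $L^2_\sigma(\Omega)$, as claimed.
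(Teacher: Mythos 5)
Your proposal is correct and follows the same overall strategy as the paper: symmetry from \eqref{eq:WeakBCNS}, coercivity of the form on $H^1_n(\Omega)\cap L^2_\sigma(\Omega)$ via Korn's inequality \eqref{eq:SecondKorn}, Lax--Milgram for a weak solution, $H^2$ regularity to land in $\mathcal{D}(A_\gamma(c))$, and then the standard fact that a symmetric, bounded-below, surjective operator is self-adjoint. The one place where you genuinely diverge is the regularity step. You propose to keep the friction term $\gamma(c)\ve_\tau$ in the natural boundary condition and run the full localisation/boundary-flattening/difference-quotient machinery for the Stokes system with variable viscosity directly. The paper instead first uses the extension operator of Lemma~\ref{lem:Extension} to produce $\ue\in H^2(\Omega)^d\cap H^1_0(\Omega)^d\cap L^2_\sigma(\Omega)$ with $(\vc{n}\cdot 2\nu(c)\tn{D}\ue)_\tau=-\gamma(c)\ve|_{\partial\Omega}$ (this makes sense because the weak solution already has an $H^{1/2}(\partial\Omega)$ trace), so that $\tilde\ve=\ve+\ue$ satisfies the homogeneous slip condition $(\vc{n}\cdot 2\nu(c)\tn{D}\tilde\ve)_\tau=0$ and the problem reduces to a previously established $H^2$ theorem (\cite[Theorem~5.2.3]{Habilitation}) for that cleaner boundary condition. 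Your route avoids the extension trick but requires you to actually carry out the tangential difference-quotient argument in the presence of both the pressure and the lower-order boundary term; this is standard but nontrivial for Stokes-type systems (the normal second derivatives and $\nabla p$ have to be recovered from the equation and the divergence constraint, as you indicate), and the coefficient regularity $\nu(c),\gamma(c)\in W^1_q(\Omega)$, $q>d$, is exactly what makes it go through. Neither version is more than a sketch at this point, so I would not call your step a gap, but be aware that the paper deliberately outsources precisely the part you label the principal obstacle.
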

 \begin{proof}
First of all, $A_\gamma(c)\colon \mathcal{D}(A_\gamma)\to L^2_\sigma(\Omega)$ is invertible because of the following arguments: By the Lemma of Lax-Milgram for every $\vc{f}\in L^2_\sigma(\Omega)$ there is a unique $\ve \in H^1_n(\Omega)\cap L^2_\sigma(\Omega)$ such that
\begin{equation*}
  (2\nu(c)\tn{D}\ve,\tn{D}\we)_{L^2(\Omega)} +  (\gamma(c)\ve,\we)_{L^2(\partial\Omega)} = (\vc{f},\we)_{L^2(\Omega)}\qquad \text{for all}\ \we \in \mathcal{D}(A_\gamma).
\end{equation*}
Since $\gamma(c) \ve|_{\partial\Omega} \in H^{\frac12}(\partial\Omega)^d$, there is some $\ue\in H^2(\Omega)^d\cap H^1_0(\Omega)^d\cap L^2_\sigma(\Omega)$ such that $(\vc{n}\cdot 2\nu(c)\tn{D} \ue)_\tau= -\gamma(c) \ve|_{\partial\Omega}$ due to Lemma~\ref{lem:Extension}. By Gau\ss' theorem we obtain that $\tilde{\ve}= \ve+\ue$ solves
\begin{equation*}
  (2\nu(c)\tn{D}\tilde{\ve},\tn{D}\we)_{L^2(\Omega)}  = (\vc{f}-\Div (2\nu \tn{D}\ue),\we)_{L^2(\Omega)}\qquad \text{for all}\ \we \in \mathcal{D}(A_\gamma).
\end{equation*}
Because of \cite[Theorem~5.2.3]{Habilitation}, we conclude that $\tilde{\ve}\in H^2(\Omega)^d$, which yields $\ve\in H^2(\Omega)^d$. 
 Since $A_\gamma(c)$ is symmetric due to \eqref{eq:WeakBCNS}, $A_\gamma(c)$ is self-adjoint.
\end{proof}

Finally, we need:
\begin{lem}\label{lem:Interpol}
 Let $\gamma\colon \R\to [0,\infty)$ be continuously differentiable, $c\in W^1_q(\Omega)$ for some $q>d$, and assume that $\Omega$ possesses no axis of symmetry if $\inf_{s\in\R}\gamma(s)=0$. 
 Then
\begin{equation*}
  (L^2_\sigma(\Omega),\mathcal{D}(A_\gamma))_{\frac12,2} = H^1(\Omega)\cap L^2_\sigma(\Omega).
\end{equation*}
\end{lem}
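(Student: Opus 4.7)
The plan is to exploit that $A_\gamma(c)$ is a positive self-adjoint operator on the Hilbert space $L^2_\sigma(\Omega)$ (Theorem~\ref{thm:Positivity}), so that real interpolation at exponent $1/2$ can be computed via the functional calculus.

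First I would invoke the classical identification
\begin{equation*}
  (L^2_\sigma(\Omega),\mathcal{D}(A_\gamma))_{\frac12,2} = \mathcal{D}(A_\gamma^{1/2}),
\end{equation*}
which holds for any non-negative self-adjoint operator on a Hilbert space and follows from the spectral theorem (see e.g. Triebel, \textit{Interpolation Theory, Function Spaces, Differential Operators}, 1.18.10). Thus the problem reduces to identifying the domain of $A_\gamma(c)^{1/2}$.

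Next I would identify $\mathcal{D}(A_\gamma^{1/2})$ with the domain of the associated quadratic form. Using \eqref{eq:WeakBCNS}, the form associated to $A_\gamma(c)$ is
\begin{equation*}
  a(\ve,\we) = (2\nu(c)\tn{D}\ve,\tn{D}\we)_{L^2(\Omega)} + (\gamma(c)\ve,\we)_{L^2(\partial\Omega)},
\end{equation*}
so that $\|A_\gamma^{1/2}\ve\|_{L^2(\Omega)}^2 = a(\ve,\ve)$ for $\ve\in \mathcal{D}(A_\gamma^{1/2})$, and the form domain equals $\mathcal{D}(A_\gamma^{1/2})$. It therefore suffices to show that this form domain coincides with $H^1(\Omega)\cap L^2_\sigma(\Omega)$ with equivalence of norms.

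The inclusion from the form domain into $H^1\cap L^2_\sigma$ follows from the coercivity estimates already displayed before Theorem~\ref{thm:Positivity}: when $\inf \gamma>0$ one uses $\|\we\|_{H^1}\leq C\|\tn{D}\we\|_{L^2}$ for $H^1_0$-vector fields together with the boundary contribution, and when $\inf \gamma=0$ one uses the Korn inequality \eqref{eq:SecondKorn}, valid because $\Omega$ has no axis of symmetry; in both cases $a(\ve,\ve)\geq c\|\ve\|_{H^1}^2$ for $\ve \in H^1_n\cap L^2_\sigma$. The converse inclusion is immediate from $\nu\in L^\infty$, $\gamma(c)\in L^\infty(\partial\Omega)$ and the $H^1\to L^2(\partial\Omega)$ trace inequality, which gives $a(\ve,\ve)\leq C\|\ve\|_{H^1}^2$. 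The main subtlety, and the step that needs the standing assumption in Assumption~\ref{assump:3} on the absence of an axis of symmetry, is precisely the coercivity in the degenerate case $\inf\gamma=0$; once this is in place, all other steps are routine Hilbert-space and trace theory, and the lemma follows.
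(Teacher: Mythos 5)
Your argument is correct, and it reaches the identification by a different (arguably more self-contained) route than the paper. The paper characterizes $(L^2_\sigma(\Omega),\mathcal{D}(A_\gamma))_{\frac12,2}$ by the trace method of real interpolation: the inclusion $\supseteq$ is obtained by solving the abstract evolution equation with initial value $\ue_0\in H^1(\Omega)\cap L^2_\sigma(\Omega)$ via Theorem~\ref{thm:MaxReg}, and the inclusion $\subseteq$ by reading off the trace at $t=0$ using \eqref{eq:BUCEmbedding} and $(L^2(\Omega),H^2(\Omega))_{\frac12,2}=H^1(\Omega)$. You instead use the spectral-theoretic identity $(H,\mathcal{D}(A))_{\frac12,2}=\mathcal{D}(A^{1/2})$ for positive self-adjoint operators and then Kato's second representation theorem to identify $\mathcal{D}(A_\gamma^{1/2})$ with the form domain, which you pin down as $H^1(\Omega)\cap L^2_\sigma(\Omega)$ via the coercivity estimates (Korn's inequality \eqref{eq:SecondKorn} in the case $\inf\gamma=0$) and the trace inequality. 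Both proofs rest on the same two pillars, namely self-adjointness/positivity of $A_\gamma(c)$ and the Korn-type coercivity, so neither buys extra generality; your version makes the role of the square-root domain and of the symmetry hypothesis explicit, whereas the paper's version avoids the representation theorem at the cost of routing the argument through maximal regularity. One point you pass over quickly and should record: to conclude that $A_\gamma(c)$ is \emph{the} operator associated with the form $a(\cdot,\cdot)$ on $H^1_n(\Omega)\cap L^2_\sigma(\Omega)$, note that \eqref{eq:WeakBCNS} extends to test functions $\we$ in the whole form domain (the boundary term pairs $(\vc{n}\cdot 2\nu(c)\tn{D}\ve)_\tau$ with $\we_\tau=\we|_{\partial\Omega}$), so $A_\gamma(c)$ is a self-adjoint restriction of the form operator and hence coincides with it. With that remark supplied, the proof is complete.
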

\begin{proof}
We use that $A_\gamma(c)$ is an invertible, self-adjoint and positive operator on $L^2_\sigma(\Omega)$. Hence we can use Theorem~\ref{thm:MaxReg}  to conclude that for every $\ue_0\in H^1(\Omega)\cap L^2_\sigma(\Omega)$ there is some $\ue\in H^1(0,\infty;L^2_\sigma(\Omega))\cap L^2(0,\infty; \mathcal{D}(A_\gamma)$ such that $\ue|_{t=0}=\ue_0$. Thus 
\begin{equation*}
  (L^2_\sigma(\Omega),\mathcal{D}(A_\gamma))_{\frac12,2} \supseteq H^1(\Omega)^d\cap L^2_\sigma(\Omega)
\end{equation*}
by the trace method of real interpolation.
But the converse inclusion holds since for every $\ue\in H^1(0,\infty;L^2_\sigma(\Omega))\cap L^2(0,\infty; \mathcal{D}(A_\gamma))$ we obviously have $\ue_0=\ue|_{t=0}\in H^1(\Omega)^d\cap L^2_\sigma(\Omega)$ because of $(L^2(\Omega),H^2(\Omega))_{\frac12,2}= H^1(\Omega)$ and (\ref{eq:BUCEmbedding}).
\end{proof}

\def\cprime{$'$}


\end{document}